\newtheorem{thm}{Theorem}[section]
\newtheorem{prop}[thm]{Proposition}
\newtheorem{lem}[thm]{Lemma}
\newtheorem{cor}[thm]{Corollary}
\theoremstyle{definition}
\newtheorem{defn}[thm]{Definition}
\newtheorem{ex}[thm]{Example}
\theoremstyle{remark}
\newtheorem{rem}[thm]{Remark}
\newcommand{\Rr}{\mathbb R}
\renewcommand{\d}{\mathrm d}                           
\newcommand{\set}[1]{\left\{#1\right\}}                
\newcommand{\eval}[1]{\left\langle#1\right\rangle}     
\newcommand{\br}{\left[\, , \, \right]}                
\newcommand{\brr}[1]{\left[#1\right]}                  
\newcommand{\cbrr}[1]{\left[\! \left[#1\right]\!\right]}
\newcommand{\ds}{\displaystyle}                         
\newcommand{\p}{\partial}
\newcommand{\ran}{\rangle}
\newcommand{\lan}{\langle}
\newcommand{\lie}{\mathcal{L}}
\newcommand{\C}{\ensuremath{\mathcal{C}}}  
\newcommand{\T}{{\mathcal{T}}}             
\renewcommand{\d}{\mathrm d}               
\newcommand{\Lie}{\boldsymbol{\pounds}}    
\newcommand{\smc}{\mbox{\,\tiny{$\circ $}\,}}         
\DeclareMathOperator{\graf}{graph}          
\newcommand{\al}{\alpha}
\newcommand{\be}{\beta}
\begin{document}

\title[On Jacobi quasi-Nijenhuis algebroids and Courant-Jacobi morphisms]
{On Jacobi quasi-Nijenhuis algebroids and Courant-Jacobi algebroid
morphisms}

\author{Raquel Caseiro}
\address{CMUC, Department of Mathematics, University of Coimbra, Portugal}
\email{raquel@mat.uc.pt}
\author{Antonio De Nicola}
\address{CMUC, Department of Mathematics, University of Coimbra, Portugal}
\email{antondenicola@gmail.com}
\author{Joana M. Nunes da Costa}
\address{CMUC, Department of Mathematics, University of Coimbra, Portugal}
\email{jmcosta@mat.uc.pt}

\begin{abstract}
We propose a definition of Jacobi quasi-Nijenhuis algebroid and
show that any such Jacobi algebroid has an associated quasi-Jacobi
bialgebroid. Therefore, also an associated Courant-Jacobi
algebroid is obtained. We introduce the notions of  quasi-Jacobi
bialgebroid morphism and  Courant-Jacobi algebroid morphism
providing also some examples of Courant-Jacobi algebroid
morphisms.
\end{abstract}

\maketitle

\noindent {\bf Mathematics Subject Classifications:} 53D17, 17B62, 17B66.

\

\noindent {\bf Keywords:} Jacobi quasi-Nijenhuis algebroid,
quasi-Jacobi bialgebroid, Courant-Jacobi algebroid.

\section*{Introduction}             %
\label{sec:introduction}           %
\ \ \ \ It is well-known that one can consider, on a Lie
algebroid, some additional geometric tools that provide richer
structures such as, among others, Poisson Lie algebroids,
Poisson-Nijenhuis Lie algebroids \cite{KosMagri, GraUrb}, Lie
bialgebroids \cite{LiuWeiXu}
 and quasi-Lie bialgebroids
\cite{Roy}. This procedure can be repeated if we start with a Jacobi
algebroid $(A, \phi)$, i.e. a Lie algebroid $A$ together with a
$1$-cocycle $\phi \in \Gamma(A^\ast)$. The  notion of Jacobi
algebroid was introduced in \cite{GraMar1} and \cite{IglMarr} and it
is the one that fits in the Jacobi framework. As in the Poisson
case, one can define Jacobi bialgebroids \cite{GraMar1,IglMarr},
quasi-Jacobi bialgebroids \cite{jf-qjjq}, Jacobi-Nijenhuis
algebroids \cite{rj} and so on. In the case of a Poisson-Nijenhuis
Lie algebroid $(A, \pi, N)$, $\pi$  is a Poisson bivector on $A$
which is compatible, in a certain sense, with the Nijenhuis operator
$N$, i.e. a vector bundle map $N : A \to A$ with vanishing torsion.
Relaxing the condition which imposes the vanishing of the torsion,
and admitting that its nonzero value depends on a certain closed
$3$-form, one obtains what is called a Poisson quasi-Nijenhuis Lie
algebroid, a notion recently introduced in \cite {StiXu} for the
case of manifolds and then extended to general Lie algebroids in
\cite{Ant}. It is worthwhile to mention that the notion considered
in \cite{Ant} is even more general since the compatibility between
the Poisson bivector and the operator $N$ is less restrictive,
leading to the so-called Poisson quasi-Nijenhuis Lie algebroid with
background. Either in the Poisson quasi-Nijenhuis case or in the
Poisson quasi-Nijenhuis with background case, there is an associated
quasi-Lie bialgebroid. Since the double of a quasi-Lie bialgebroid
is a Courant algebroid, one has a Courant algebroid associated to
each Poisson quasi-Nijenhuis Lie algebroid (with background).

In the Jacobi setting, the notion of Jacobi-Nijenhuis algebroid was
introduced in \cite{rj} and it is a quadruple $(A, \phi, \pi, N)$
where $(A, \phi)$ is a Jacobi algebroid, $\pi$ is a Jacobi bivector
(i.e., $[\pi, \pi]^{\phi}=0$) that is compatible with the operator
$N$ which has vanishing torsion with respect to the bracket $[ ,
]^{\phi}$. Admitting a nonzero torsion that depends on a certain
$\d^{\phi}$-closed $3$-form, we define a Jacobi quasi-Nijenhuis
algebroid and show that it has an associated quasi-Jacobi
bialgebroid and therefore also an associated Courant-Jacobi
algebroid \cite{GraMar, jj}. Another concept we introduce, having a
relevant role in the paper, is that of quasi-Jacobi bialgebroid
morphism.

An important notion related to Courant and Courant-Jacobi algebroids
$A \to M$ is that of Dirac structure -- a special vector subbundle
$L \to M$ of $A \to M$ which inherits, by restriction, a Lie
algebroid structure. In \cite{AleXu, BurIglSev}, given a Courant
algebroid $A \to M$, the authors considered a vector subbundle $L
\to P$ over a submanifold  $P$ of $M$ and introduced the notion of
Dirac structure supported on a submanifold, which is obviously a
generalization of a Dirac structure. In a natural way, we extend the
definition to the case of Dirac structures of Courant-Jacobi
algebroids supported on submanifolds of the base manifold and, using
this notion,  we introduce the concept of Courant-Jacobi algebroid
morphism. We prove that a Courant-Jacobi algebroid morphism is
associated to any morphism of quasi-Jacobi bialgebroids, as well as
to certain Jacobi quasi-Nijenhuis structures.

Very recently, while we were finishing this paper, the notion of
Jacobi quasi-Nijenhuis algebroid was also introduced in \cite{Sheng}
where the author claims the equivalence between a Jacobi
quasi-Nijenhuis structure on a Jacobi  algebroid and  a quasi-Jacobi
bialgebroid structure on its dual. The author proves his result just
on functions and exact 1-forms but these in general do not generate
the space of 1-forms in Jacobi (or Lie) algebroids so that the proof
is not correct. Only in the case of the tangent bundle to a manifold
does the equivalence hold.

This paper is divided into two sections. In section 1 we introduce
the notion of quasi-Jacobi bialgebroid morphism, we give a simpler
definition of Courant-Jacobi algebroids and we extend to
Courant-Jacobi algebroids the notion of Dirac structure supported on
a submanifold of the base manifold, which was initially given in
\cite{AleXu, BurIglSev} for Courant algebroids morphisms. Finally,
we define Courant-Jacobi morphisms and show that in the case where
the Courant-Jacobi algebroids are doubles of quasi-Jacobi
bialgebroids, a morphism $(\Phi, \phi)$ of Courant-Jacobi algebroids
provides an example of a Dirac structure supported on graph$\phi$.
In section 2, after introducing the concept of Jacobi
quasi-Nijenhuis algebroid, we show that it has an associated Poisson
quasi-Nijenhuis Lie algebroid. The main result of this section
states that to each Jacobi quasi-Nijenhuis algebroid one can
associate a quasi-Jacobi bialgebroid and in the case where the Lie
algebroid is the tangent bundle to a manifold, one has a one-to-one
correspondence. For a special kind of Jacobi quasi-Nijenhuis
algebroid, we show  that the graph of the quasi-Nijenhuis operator
determines a Courant-Jacobi algebroid morphism. \ \ \ \

\section{Quasi-Jacobi bialgebroids morphisms}%
\label{sec:Quasi-Jacobi bialgebroids}
\subsection{Quasi-Jacobi bialgebroids}

Recall that a \emph{Jacobi algebroid} \cite{GraMar1} or
\emph{generalized Lie algebroid} \cite{IglMarr} is a pair $(A,
{\phi})$ where  $A=(A, \br, \rho)$ is a Lie algebroid over a
manifold $M$ and  ${\phi}\in\Gamma(A^*)$ is a 1-cocycle, i.e.
${\ds \d {\phi}=0}$. A Jacobi algebroid has an associated
Schouten-Jacobi bracket on the graded algebra
$\Gamma(\wedge^\bullet A)$ of multivector fields on $A$ given by
\begin{equation}\label{Jacobibracket}
    \brr{P,Q}^{\phi}=\brr{P,Q}+(p-1)P\wedge i_{\phi} Q- (-1)^{p-1}(q-1)
    i_{\phi} P\wedge Q,
\end{equation}
for $P\in \Gamma(\wedge^p A), Q\in\Gamma(\wedge^q A)$, $p\geq 1, q
\geq 1$.

In a Jacobi algebroid the anchor $\rho$ is replaced by
$\rho^{\phi}$ which is the representation of the Lie algebra
$\Gamma(A)$ on $C^\infty(M)$ given by
$$\rho^{\phi}(X)f=\rho(X)f+f\lan {\phi},X\ran.$$ The cohomology
operator $\d^{\phi}$ associated with this representation is called
the \emph{${\phi}$-differential} of $A$ and is given by
\begin{equation}\label{phidiff}
 \d^{\phi} \omega= \d\omega+{\phi}\wedge
 \omega,\quad\omega\in\Gamma(\wedge^k A^*).
\end{equation}

Any vector bundle map $\Psi:A\to B$ induces a map $\Psi^*:
\Gamma(B^*)\to \Gamma(A^*)$ which assigns to each section
$\al\in\Gamma(B^*)$
 the section $\Psi^*\al$ given by
\[
\Psi^*\al(X)(m)=\eval{\al(\psi(m)), \Psi_m X(m)}, \quad \forall m\in
M, \, X\in \Gamma(A),
\]
where $\psi:M\to N$ is  the map induced by $\Psi$ on the base
manifolds. We denote by the same symbol $\Psi^*$ the extension of
this map to the multisections of $B^*$, where we set $\Psi^*f=f\smc
\psi$, for $f\in C^\infty (N)$.

Now, let $A\to M$ and $B\to N$ be two Lie algebroids and
${\phi_A}\in\Gamma(A^*)$, ${\phi_B}\in\Gamma(B^*)$  1-cocycles, so
that $(A, {\phi_A})$ and $(B, {\phi_B})$ are Jacobi algebroids.
Then it is natural to give the following definition.
\begin{defn}\label{def:Jacobi:morphism}
A \emph{Jacobi algebroid morphism} from $(A, {\phi_A})$ to $(B,
{\phi_B})$ is a vector bundle map $\Psi:A\to B$ such that
$\Psi^*:(\Gamma(\wedge^\bullet B^*), \d^{\phi_B}_B)\to
(\Gamma(\wedge^\bullet A^*),\d^{\phi_A}_A)$ is a chain map.
\end{defn}

The definition of Jacobi algebroid morphism was already given in
\cite{Igl} in terms of Lie algebroid morphisms. The two definitions
are equivalent, as is stated in the following proposition.
\begin{prop}\label{prop:Jacobi:morphism}
Let $(A, {\phi_A})$ and $(B, {\phi_B})$ be Jacobi algebroids and
$\Psi:A\to B$ a vector bundle map. Then  $\Psi$ is a Jacobi
algebroid morphism if and only if the following conditions are
satisfied:
\begin{itemize}
 \item [i)] $\Psi^*\colon(\Gamma(\wedge^\bullet B^*), \d_B)\to
(\Gamma(\wedge^\bullet A^*),\d_A)$ is a chain map, i.e. $\Psi$ is
a Lie algebroid morphism;
 \item [ii)]$\Psi^*{\phi_B}={\phi_A}$.
 \end{itemize}
\end{prop}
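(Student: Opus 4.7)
The plan is to exploit the explicit formula \eqref{phidiff}, $\d^{\phi}\omega = \d\omega + \phi\wedge\omega$, together with the elementary fact that the pullback $\Psi^*$ is an algebra homomorphism on the exterior algebra of sections of $B^*$, so that $\Psi^*(\phi_B\wedge\omega) = \Psi^*\phi_B \wedge \Psi^*\omega$ for every $\omega \in \Gamma(\wedge^\bullet B^*)$.

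For the implication $i)+ii) \Rightarrow \Psi^*$ is a chain map for the twisted differentials, the argument is a direct expansion: for $\omega\in\Gamma(\wedge^k B^*)$ one writes
\[
\Psi^*\d^{\phi_B}_B\omega = \Psi^*\d_B\omega + \Psi^*\phi_B\wedge\Psi^*\omega = \d_A\Psi^*\omega + \phi_A\wedge\Psi^*\omega = \d^{\phi_A}_A\Psi^*\omega,
\]
using $i)$ in the first equality of the middle step and $ii)$ in the second. Nothing delicate happens here.

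For the converse, I would assume that $\Psi^*$ intertwines $\d^{\phi_B}_B$ and $\d^{\phi_A}_A$ and extract $ii)$ first, by testing the chain property on the constant function $1\in C^\infty(N) = \Gamma(\wedge^0 B^*)$. Since $\d_B 1 = 0$ and $\d_A 1 = 0$, formula \eqref{phidiff} gives $\d^{\phi_B}_B 1 = \phi_B$ and $\d^{\phi_A}_A 1 = \phi_A$; combined with $\Psi^* 1 = 1$, the chain property at $1$ collapses to $\Psi^*\phi_B = \phi_A$, which is $ii)$. Once $ii)$ is in hand, I would substitute it into the chain property applied to a general $\omega$, expand both sides through \eqref{phidiff}, and cancel the identical $\phi_A\wedge\Psi^*\omega$ terms on the two sides to obtain $\Psi^*\d_B\omega = \d_A\Psi^*\omega$, which is $i)$.

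The only conceptual point is the disentanglement of $\d$ from $\phi\wedge\cdot$ in the converse direction; evaluating on the constant function $1$ is exactly what separates them, since the underlying differential kills constants and only the twisting term survives. Everything else is a straightforward consequence of $\Psi^*$ being a wedge-algebra homomorphism.
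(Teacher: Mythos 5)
Your proof is correct and follows essentially the same route as the paper's: the forward direction is the direct expansion via $\d^{\phi}\omega=\d\omega+\phi\wedge\omega$, and the converse extracts ii) by evaluating the chain condition on the constant function $1$ and then cancels the twisting terms to recover i). You have merely spelled out the details that the paper leaves implicit.
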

\begin{proof}
If conditions i) and ii) hold, then $\Psi$ is clearly a Jacobi
algebroid morphism. Conversely, if $\Psi$ is a Jacobi algebroid
morphism, then by evaluating the condition
\[\Psi^* \smc \d^{\phi_B}_B=\d^{\phi_A}_A \smc \Psi^*\]
over the constant function $1\colon N \to \Rr$ one obtains
condition ii). Then, condition i) follows as well.
\end{proof}

In the Jacobi framework, the structure analogous to that of
quasi-Lie bialgebroid is given by the following definitions.

\begin{defn}
Let $(A, \phi)$ be a Jacobi algebroid and $W\in \Gamma(A)$. Then a
(degree-one) \emph{$W-$quasi differential} on $(A, \phi)$ is a
linear operator $\tilde \d_*\colon \Gamma\left(\wedge^\bullet
A\right) \to \Gamma\left(\wedge^{\bullet+1} A\right)$ such that
\begin{itemize}
 \item [i)] $\tilde \d_* (P \wedge Q)= \tilde \d_* P \wedge Q + (-1)^p P \wedge \tilde \d_* Q  - W \wedge P \wedge Q,$
 \item [ii)]$\tilde \d_*\brr{P,Q}^{\phi}_A=[\tilde \d_* P,Q]^{\phi}_A + (-1)^{p+1}[P, \tilde \d_*Q]^{\phi}_A,$
\end{itemize}
 for any $P\in \Gamma(\wedge^p A), Q\in\Gamma(\wedge^q A)$.
\end{defn}
If $\tilde \d_*$ is a $W-$quasi differential on $(A, \phi)$, then
considering the constant function $1\colon M \to \Rr$, from i) we
get $\tilde \d_*1=W$. So $\d_*P=\tilde\d_*P-W\wedge P$ is the
associated  degree one derivation of the  Gerstenhaber algebra
$\left(\Gamma(\wedge^\bullet A), \wedge, \br_A\right)$.

\begin{defn}\label{def:quasi:Jacobi:bialgebroid}
A \emph{quasi-Jacobi bialgebroid} is a Jacobi algebroid $(A,
\phi)$ equipped with a $W-$quasi differential $\tilde \d_*$ and a
$3$-section of $A$, $X_A\in\Gamma(\wedge^{3} A)$ such that
\[\tilde \d_*X_A=0 \quad \mbox{and} \quad \tilde \d_*^{2}=\brr{X_A, - }^{\phi}_A.\]
\end{defn}

\begin{rem}
This definition is equivalent to that of \cite{jf-tj}. Given a
quasi-Jacobi bialgebroid $(A,\phi_A,\tilde\d_{*},X_A)$, by using
$\d_{*}$ one can define a map $\rho_{A^*}\colon A^* \to TM$ by
\[
\rho_{A^*}(\al)(f)=\al(\d_{*}f)
\]
and a bracket $\br_{A^*}$ on the sections of $A^*$ by
\[
\brr{\al,\be}_{A^*}(X)=\rho_{A^*}(\al)(\be(X))-\rho_{A^*}(\be)(\al(X))-\d_{*}X(\al,\be),
\]
for $\al$, $\be\in \Gamma(A^*)$, $f\in C^\infty(M)$ and
$X\in\Gamma(A)$. Then, the Jacobi bracket $\br_{A^*}^W$ on
$\Gamma(A^*)$ is defined in the same way as 
in \eqref{Jacobibracket}.

 In particular, when $\phi=0$ and $W=0$, then $(A,\d_{*},X_A)$ is a
\emph{quasi-Lie bialgebroid} \cite{Roy}. On the other hand, when
$X_A=0$ then a  Jacobi bialgebroid $(A,\phi,\tilde\d_{*})$ is
defined.
\end{rem}

Examples of quasi-Jacobi bialgebroids are given by the
quasi-Jacobi bialgebroid associated to a twisted-Jacobi manifold
\cite{jf-tj,jf-qjjq}.

We propose the following definition of morphism between
quasi-Jacobi bialgebroids:

\begin{defn}\label{def:quasi:Jacobi:morphism}
Let $(A,\phi_A,\tilde\d_{*A},X_A)$ and
$(B,\phi_B,\tilde\d_{*B},X_B)$ be quasi-Jacobi bialgebroids over $M$
and $N$, respectively. A vector bundle map $\Psi:A\to B$ is  a
\emph{quasi-Jacobi bialgebroid morphism }if
\begin{itemize}
\item[1)] $\Psi$ is a Jacobi algebroid morphism;
\item[2)] $\Psi^*$ is compatible with the brackets on the sections of $A^*$
and $B^*$:
          \[\brr{\Psi^*\al,
          \Psi^*\be}_{A^*}=\Psi^*\brr{\al,\be}_{B^*};\]
\item[3)] the vector fields $\rho_{B^*}(\al)$ and
$\rho_{A^*}(\Psi^*\al)$ are $\psi$-related:
           \[T\psi\cdot \rho_{A^*}(\Psi^*\al)=\rho_{B^*}(\al)\smc \psi;\]
\item[4)] $ \Psi X_A=X_B\smc\psi$;
\item[5)] $ \Psi W_A=W_B\smc\psi$;
\end{itemize}
where $\al$, $\be\in \Gamma(B^*)$ and $\psi:M\to N$ is the smooth
map induced by $\Psi$ on the base and
$W_A=\tilde\d_{*A}1,W_B=\tilde\d_{*B}1$ are the associated
sections of $A^*,B^*$, respectively.
\end{defn}

\begin{ex}
When $X_A=X_B=0$ we have a \emph{Jacobi bialgebroid morphism}, i.e.,
 a Jacobi algebroid morphism $\Psi:A\to B$ such that conditions 2) and 3) above are satisfied
and $\Psi W_A=W_B\smc \psi$.
\end{ex}

\begin{ex}
Let $(A,\phi_A,\tilde\d_{*A},X_A)$ and
$(B,\phi_B,\tilde\d_{*B},X_B)$ be quasi-Jacobi bialgebroids over the
same base manifold $M$. We can see that a base-preserving
quasi-Jacobi bialgebroid morphism (i.e. such that
$\psi=\textrm{id}$) is a vector bundle map $\Psi:A\to B$ such that
$\Psi^* \smc \d^{\phi_B}_B = \d^{\phi_A}_A \smc \Psi^*$, $\Psi \smc
\tilde\d_{*B}=\tilde\d_{*A} \smc \Psi$ and $\ds \Psi X_A=X_B$.
\end{ex}

In Definition \ref{def:quasi:Jacobi:morphism}, when $\phi=0$ and
$W=0$, then $(A,\d_{*A},X_A)$ and $(B,\d_{*B},X_B)$ are quasi-Lie
bialgebroids and we obtain as a special case the definition of a
\emph{quasi-Lie bialgebroid morphism} as a vector bundle map
$\Psi:A\to B$ such that $\Psi$ is a Lie algebroid morphism and
satisfies conditions 2)-4).

\subsection{Courant-Jacobi algebroids}
Next, following the ideas of \cite{ks3}, we will present a
definition of  Courant-Jacobi algebroid which is simpler than the
original one in \cite{GraMar}.
\begin{defn}\label{def:Courant-Jacobi}
A \emph{Courant-Jacobi algebroid} is a vector bundle $E\to M$
equipped with a fiberwise nondegenerate symmetric bilinear form
$\eval{ \, , \, }$, a vector bundle map $\rho: E\to TM\oplus \Rr$
and a bilinear bracket $\smc$ on $\Gamma(E)$ satisfying:
 \begin{itemize}
 \item[CJ1)] $\ds e_1\smc(e_2\smc e_3)=(e_1\smc e_2)\smc e_3+e_2\smc(e_1\smc
 e_3)$;
 \item[CJ2)] $\ds \Lie_{\rho(e_1)}\eval{e_2, e_3}=\eval{e_1, e_2 \smc e_3 + e_3 \smc
 e_2}$;
 \item[CJ3)] $\ds \Lie_{\rho(e_1)}\eval{e_2, e_3}=\eval{e_1 \smc e_2,  e_3} + \eval{e_2, e_1 \smc
 e_3}$;
 \end{itemize}
for all $e_1,e_2,e_3\in\Gamma(E)$.
\end{defn}

Recall that in the definition of Courant-Jacobi algebroid in
\cite{GraMar}, instead of CJ1)-CJ3) one has the following
conditions:
 \begin{itemize}
 \item[CJ$'$1)] $\ds e_1\smc(e_2\smc e_3)=(e_1\smc e_2)\smc e_3+e_2\smc(e_1\smc
 e_3)$;
 \item[CJ$'$2)] $\ds \rho(e_1\smc e_2)=\brr{\rho(e_1),\rho(e_2)}$\footnote{The bracket on the right
 hand side is the Lie bracket on $\Gamma(TM\oplus \Rr)$ defined by
 $\brr{(X,f),(Y,g)}=(\brr{X,Y},X\cdot g-Y\cdot f)$};
 \item[CJ$'$3)] $\ds \eval{e_1\smc e, e}=\eval{e_1, e \smc e}$;
 \item[CJ$'$4)] $\ds \Lie_{\rho(e_1)}\eval{e,e}=2\eval{e_1\smc e,
 e}$;
 \end{itemize}
for all $e,e_1,e_2,e_3\in\Gamma(E)$.

The next proposition proves that Definition \ref{def:Courant-Jacobi}
is equivalent to the one in \cite{GraMar}.

\begin{prop}\label{prop:equivalence:Courant-Jacobi}
Let $E\to M$ be a vector bundle  equipped with a fiberwise
nondegenerate symmetric bilinear form $\eval{ \, , \, }$, a vector
bundle map $\rho: E\to TM\oplus \Rr$ and a bilinear bracket $\smc$
on $\Gamma(E)$. Then, the conditions \emph{CJ1)-CJ3)} are equivalent
to \emph{CJ$'$1)-CJ$'$4)}.

\end{prop}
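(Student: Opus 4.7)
The two axiom sets coincide on CJ1$)$ $=$ CJ$'$1$)$, so it suffices to prove the equivalence of the pair CJ2$)$, CJ3$)$ with the triple CJ$'$2$)$, CJ$'$3$)$, CJ$'$4$)$. For the direction CJ$'$ $\Rightarrow$ CJ, I would argue by polarization: replacing $e$ by $e+e'$ in CJ$'$4$)$, expanding by bilinearity, and subtracting CJ$'$4$)$ for $e$ and $e'$ separately produces, after invoking the symmetry of $\eval{\,,\,}$, precisely CJ3$)$. With CJ3$)$ available, the analogous polarization of CJ$'$3$)$ gives the identity $\eval{e_1\smc e_2,e_3}+\eval{e_1\smc e_3,e_2}=\eval{e_1,e_2\smc e_3+e_3\smc e_2}$; rewriting the second term on the left via symmetry and comparing with CJ3$)$ yields CJ2$)$. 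Notice that CJ$'$2$)$ plays no role in this direction.

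For the direction CJ $\Rightarrow$ CJ$'$, specialising CJ3$)$ to $e_2=e_3=e$ immediately gives CJ$'$4$)$ via symmetry, and taking the difference of CJ2$)$ and CJ3$)$ at $e_2=e_3=e$ yields CJ$'$3$)$. The real work is extracting the compatibility CJ$'$2$)$. I would proceed in two stages. First, substituting $e_3\mapsto fe_3$ in CJ3$)$, expanding the action of $\rho(e_1)$ on the function $f\eval{e_2,e_3}$, cancelling the $f$-linear contributions, and appealing to the nondegeneracy of $\eval{\,,\,}$, one deduces a right-module Leibniz rule of the form $e_1\smc(fe_3)=f(e_1\smc e_3)+X_{e_1}(f)\,e_3$, where $X_{e_1}$ denotes the $TM$-component of $\rho(e_1)$. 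Second, inserting this rule into both sides of CJ1$)$ applied to the triple $(e_1,e_2,fe_3)$ and cancelling the terms that reduce to CJ1$)$ on the untwisted sections multiplied by $f$, one reads off from the coefficient of $e_3$ the identity $[X_{e_1},X_{e_2}]=X_{e_1\smc e_2}$, i.e.\ the $TM$-part of CJ$'$2$)$.

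The main obstacle I anticipate is the $\Rr$-component of CJ$'$2$)$, since the Leibniz rule above depends only on the vector-field part of $\rho$ and is insensitive to the scalar component $\sigma_e$ of $\rho(e)=(X_e,\sigma_e)$. To recover it, I expect to pair CJ2$)$ with a dual left-module Leibniz rule of the shape $(fe)\smc e'=f(e\smc e')-X_{e'}(f)\,e+\eval{e,e'}\mathcal{D}f$, where $\mathcal{D}\colon C^\infty(M)\to\Gamma(E)$ is defined through nondegeneracy by $\eval{\mathcal{D}f,e}=X_e(f)$. Its derivation mirrors the right-module case but uses CJ2$)$ in place of CJ3$)$; evaluating the resulting identities at the constant function $1$, and exploiting $\Lie_{\rho(e)}1=\sigma_e$, should produce the scalar constraint $\sigma_{e_1\smc e_2}=X_{e_1}(\sigma_{e_2})-X_{e_2}(\sigma_{e_1})$ needed to complete CJ$'$2$)$. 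Everything else is formal bookkeeping and specialisation.
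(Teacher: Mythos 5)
Most of what you write coincides with the paper's own proof: the polarization/specialization arguments showing that CJ2), CJ3) are equivalent to CJ$'$3), CJ$'$4) are exactly the ones used there; your right-module Leibniz rule $e_1\smc(fe_3)=f(e_1\smc e_3)+X_{e_1}(f)\,e_3$ is identity \eqref{theta} of the paper (since $\Lie_{\rho(e_1)}f-f\theta(e_1)=X_{e_1}(f)$); and inserting it into CJ1) to extract $X_{e_1\smc e_2}=[X_{e_1},X_{e_2}]$ is the paper's route to CJ$'$2). Up to that point your argument is correct.

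The genuine gap is exactly where you anticipate it, and your proposed fix does not close it. Both of your Leibniz rules are blind to the scalar component $\sigma_e$ of the anchor: in the right-module rule the combination $\Lie_{\rho(e_1)}f-f\sigma_{e_1}=X_{e_1}(f)$ has already cancelled $\sigma_{e_1}$, and your operator $\mathcal D$ satisfies $\eval{\mathcal Df,e}=X_e(f)$, so $\mathcal D1=0$; no $\sigma$ survives in either rule. Hence every identity obtained by feeding them into CJ1) --- and a fortiori its evaluation at the constant function $1$ --- involves only the vector fields $X_e$ and cannot constrain $\sigma_{e_1\smc e_2}$. The scalars enter the axioms in one place only, through $\Lie_{\rho(e_1)}\eval{e_2,e_3}=X_{e_1}\eval{e_2,e_3}+\sigma_{e_1}\eval{e_2,e_3}$ in CJ2)/CJ3), so the only way to pin down $\sigma_{e_1\smc e_2}$ is to apply one of these axioms with $e_1\smc e_2$ in the anchor slot. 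Concretely: expand $(e_1\smc e_2)\smc e_3=e_1\smc(e_2\smc e_3)-e_2\smc(e_1\smc e_3)$ by CJ1), pair with an arbitrary $e_4$, apply CJ3) twice to each term; the cross terms cancel and one finds $\eval{(e_1\smc e_2)\smc e_3,e_4}+\eval{e_3,(e_1\smc e_2)\smc e_4}=\brr{\Lie_{\rho(e_1)},\Lie_{\rho(e_2)}}\eval{e_3,e_4}$. Comparing with CJ3) applied to $e_1\smc e_2$ gives $\Lie_{\rho(e_1\smc e_2)}g=\brr{\Lie_{\rho(e_1)},\Lie_{\rho(e_2)}}g$ for every $g=\eval{e_3,e_4}$, hence for every function by fiberwise nondegeneracy, and since $\brr{\Lie_{(X,f)},\Lie_{(Y,g)}}=\Lie_{(\brr{X,Y},\,Xg-Yf)}$ this is all of CJ$'$2), scalar part included. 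Be aware that this is precisely the delicate step of the proposition: the coefficient-of-$e_3$ computation in the Jacobi identity (the one carried out in the text as well) only determines the vector-field half of $\rho(e_1\smc e_2)$, so the scalar half genuinely requires this extra argument.
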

\begin{proof}
Conditions CJ$'$1) and  CJ1) are the same. Let us first show that
conditions CJ$'$2)-CJ$'$4) imply conditions CJ2) and CJ3). For any
$e_1,e_2,e_3\in\Gamma(E)$ we have from condition CJ$'$4) and the
bilinearity of $\smc$ and $\eval{ \, , \, }$
\begin{equation}\label{rhocj4}
\Lie_{\rho(e_1)}\eval{e_2+e_3,e_2+e_3}
                   =2\eval{e_1\smc e_2,e_2}+2\eval{e_1\smc e_2,e_3}+2\eval{e_1\smc
                    e_3,e_2}+2\eval{e_1\smc e_3,e_3}.
\end{equation}
On the other hand, we also have
\begin{equation}\label{rhobil}
\Lie_{\rho(e_1)}\eval{e_2+e_3,e_2+e_3}=\Lie_{\rho(e_1)}\eval{e_2,e_2}+2\Lie_{\rho(e_1)}\eval{e_2,e_3}
+\Lie_{\rho(e_1)}\eval{e_3,e_3}.
\end{equation}
 From
\eqref{rhocj4} and \eqref{rhobil}, and using again condition
CJ$'$4), we obtain CJ3). Condition CJ$'$3) allows us to write
\[
\eval{e_1\smc (e_3+e_2), e_3+e_2}=\eval{e_1, (e_3+e_2) \smc
(e_3+e_2)},
\]
or, equivalently,
\[
\eval{e_1\smc e_3,e_2} + \eval{e_1\smc e_2,e_3} =\eval{e_1, (e_2
\smc e_3) + (e_3 \smc e_2)},
\]
which expresses the equality of the right-hand sides of equations
CJ2) and CJ3) and therefore CJ2) holds.

Now let us prove that, conversely, CJ2) and CJ3) imply
CJ$'$2)-CJ$'$4). If we take $e_3=e_2$ in CJ3), we obtain
\begin{equation}\label{cj'3z=e_2}
\Lie_{\rho(e_1)}\eval{e_2, e_2}=2\eval{e_1 \smc e_2,  e_2},
\end{equation}
which is exactly CJ$'$4). Moreover, if we take $e_3=e_2$ in CJ2), we
get
\begin{equation}\label{cj'2z=e_2}
\Lie_{\rho(e_1)}\eval{e_2, e_2}=2\eval{e_1,  e_2 \smc  e_2}.
\end{equation}
From \eqref{cj'2z=e_2} and \eqref{cj'3z=e_2}, we conclude that
CJ$'$3) holds.

It remains to show that $\rho(e_1\smc e_2)=
\brr{\rho(e_1),\rho(e_2)}$, for all $e_1,e_2\in\Gamma(E)$. For any
$e_1\in\Gamma(E)$, $\rho(e_1)$ is a first order differential
operator on $M$. Then, $\Lie_{\rho(e_1)}(1)=\theta(e_1)$ with
$\theta$ a 1-cocyle  and, using  CJ3), we have:
\begin{equation}\label{theta}
e_1 \smc fe_2 = f e_1 \smc e_2 +
(\Lie_{\rho(e_1)}f-f\theta(e_1))e_2),
\end{equation}
for any $f\in C^\infty(M)$.
Starting with the Jacobi identity and
applying \eqref{theta}, we get
\begin{equation}\label{ele}
\begin{split}
e_1 \smc (e_2\smc fe_3) = (e_1\smc e_2)\smc fe_3+ e_2\smc (e_1\smc
fe_3) = f(e_1\smc
e_2) \smc e_3+ (\Lie_{\rho(e_1\smc e_2)}f)e_3-\theta(e_1 \smc e_2) fe_3\\
+ f e_2 \smc(e_1\smc e_3)+(\Lie_{\rho(e_2)}f)(e_1\smc e_3) -
\theta( e_2)f e_1\smc e_3
+(\Lie_{\rho(e_1)}f)(e_2\smc e_3) + \Lie_{\rho(e_2)}(\Lie_{\rho(e_1)}f)e_3\\
-\theta(e_2)(\Lie_{\rho(e_1)}f)e_3 - \theta(e_1)f e_2\smc e_3 -
\Lie_{\rho(e_2)}(\theta(e_1)f)e_3+ \theta(e_2)\theta(e_1)fe_3 .
\end{split}
\end{equation}
On the other hand,
\begin{equation}\label{twe}
\begin{split}
e_1 &\smc (e_2\smc fe_3)= e_1\smc (f e_2\smc e_3 +
(\Lie_{\rho(e_2)}f)e_3 -
\theta(e_2)fe_3)\\
&=fe_1\smc (e_2\smc e_3) + (\Lie_{\rho(e_1)}f)e_2\smc e_3 -
\theta(e_1)f e_2\smc e_3
+ (\Lie_{\rho(e_2)}f) e_1\smc e_3 + \Lie_{\rho(e_1)}(\Lie_{\rho(e_2)}f)e_3 \\
&-\theta(e_1)(\Lie_{\rho(e_2)}f)e_3 - \theta(e_2)f e_1\smc e_3 -
\Lie_{\rho(e_1)}(\theta(e_2)f)e_3+ \theta(e_1)\theta(e_2)fe_3 .
\end{split}
\end{equation}
From \eqref{ele} and \eqref{twe}, we obtain
\[
(\Lie_{\rho(e_1\smc e_2)}f) - \Lie_{\rho(e_1)}(\Lie_{\rho(e_2)}f)
+ \Lie_{\rho(e_2)}(\Lie_{\rho(e_1)}f) = 0
\]
and so,
\[
\rho(e_1\smc e_2)=\brr{\rho(e_1),\rho(e_2)}.
\]
\end{proof}

An alternate definition which was also proved to be equivalent to
that of Courant-Jacobi algebroid was given in \cite{jj} under the
name of \emph{generalized Courant algebroid}.

Associated with the bracket $\smc$, we can define  a
skew-symmetric bracket on the sections of $E$ by:
\[
\cbrr{e_1,e_2}=\frac{1}{2}\left(e_1\smc e_2- e_2\smc e_1\right)
\]
and the properties CJ1)-CJ3) can be expressed in terms of this
bracket.

\begin{rem}
Note that in Definition \ref{def:Courant-Jacobi} when
$\rho^*(0,1)=0$ (where
$(0,1)\in\Gamma(T^*M\times\Rr)=\Omega^{1}(M)\times C^\infty(M)$),
then by replacing $\rho$ with its natural projection
$\rho':=pr_{1}\smc\rho\colon E \to TM$ we obtain a Courant
algebroid structure on the vector bundle $E \to M$.
\end{rem}

\begin{ex}[Double of a
quasi-Jacobi bialgebroid
\cite{jf-tj}]\label{double:quasi:Jacobi:bialgebroid} Let
$(A,\phi,\tilde\d_*,X_A)$ be a quasi-Jacobi bialgebroid over $M$
and let $W\in\Gamma(A)$ be the associated section. Its double
$E=A\oplus A^*$ is a Courant-Jacobi algebroid when it is equipped
with the pairing $\eval{X+\al, Y+\be}=\ds{ \frac{1}{2}( \al(Y) +
\be(X))}$, the anchor $\rho=\rho^{\phi}_A+\rho^W_{A^*}$ and the
bracket
\begin{align*}
(X+\al) \smc (Y+\be) &= \left(\brr{X,Y}^{\phi}_A+ \tilde\Lie_{*\al} Y - i_\be \tilde\d_* X + X_A(\al,\be, - )\right) \\
&+ \left(\brr{\al,\be}^W_*+ \Lie^{\phi}_{X}\be - i_Y\d^{\phi}\al
\right),
\end{align*}
where $\Lie^{\phi}$ and $\tilde\Lie_*$ are the Lie derivative and
the quasi-Lie derivative operators defined, respectively, by
$\d^\phi$ and $\tilde\d_*$.
\end{ex}

Taking $X_A=0$ we have the Courant-Jacobi algebroid structure on
the double of a Jacobi bialgebroid. In particular, when the Jacobi
bialgebroid is $(TM\times\Rr, (0,1),\tilde\d_*=0)$, its double is
the standard Courant-Jacobi algebroid associated to the manifold
$M$, $\mathcal{E}^1(M)=(TM\times\Rr)\oplus(T^*M\times \Rr)$. The
 bilinear form reads:
\[
\eval{(X_1, f_1) + (\al_1, g_1), (X_2, f_2) + (\al_2, g_2)} =
\frac{1}{2} \left(i_{X_2}\al_1 + i_{X_1}\al_2 + f_1g_2 +
f_2g_1\right),
\]
the bilinear bracket is given by
\begin{align*}
& \left((X_1, f_1) +(\al_1, g_1)\right) \smc \left((X_2, f_2) +
(\al_2, g_2) \right) =
 \left([X_1,X_2], X_1(f_2) - X_2(f_1)\right)+\\
&+\left( \lie_{X_1}\al_2 - i_{X_2}d\al_1 +
 f_1\al_2  - f_2\al_1
+ g_2 df_1 + f_2 dg_1 ,\right.\\
&\left. X_1(g_2) - X_2(g_1)+ i_{X_2}\al_1  + f_1g_2\right),
\end{align*}
and the anchor is
$$\rho((X_1, f_1) + (\al_1,
g_1))=(X_1,f_1),$$ for  $\ds (X_i, f_i) + (\al_i, g_i) \in
\Gamma(\mathcal{E}^1(M))$, with $i = 1, 2$.

\subsection{Dirac structures supported on a submanifold}

A \emph{Dirac subbundle} or \emph{Dirac structure of a
Courant-Jacobi algebroid} $E$ is a vector subbundle $A\subset E$,
which is maximal isotropic with respect to the pairing $\eval{\, ,
\,}$ and is integrable in the sense that the space of the sections
of $A$ is closed under the bracket on $\Gamma(E)$. Restricting  the
skew-symmetric bracket of $E$ and the natural projection
$pr_{1}\smc\rho\colon E \to TM$ of the anchor $\rho$ to $A$, we
obtain a Lie algebroid structure on $A$.

As a way to generalize Dirac structures we have the concept of
generalized  Dirac structures or Dirac structures supported on a
submanifold of the base manifold. In the case of Courant algebroids
they were dealt with in \cite{AleXu, BurIglSev}. That definition can
be generalized to the Courant-Jacobi case as follows.

\begin{defn}
On a Courant-Jacobi algebroid $E\to M$, a \emph{Dirac structure
supported on a submanifold $P$ of $M$} or a \emph{generalized Dirac
structure} is a vector subbundle $F$ of $E_{|_P}$ such that:
 \begin{itemize}
 \item[D1)] for each $x\in P$, $F_x$ is maximal isotropic;
 \item[D2)] $F$ is compatible with the anchor, i.e.
         $\ds \rho_{|_P}(F)\subset TP \times \Rr$;
 \item[D3)]  For each $e_{1}, e_{2}\in \Gamma(E)$, such that ${e_{1}}_{|_P}$, ${e_{2}}_{|_P}\in \Gamma (F)$,
            we have $\ds ({e_{1}}\smc{e_{2}})_{|_P}\in \Gamma(F).$
 \end{itemize}
\end{defn}

Obviously, a Dirac structure supported on the whole base manifold
$M$ is  a Dirac structure in the usual sense.

Let $E\to M$ be a Courant-Jacobi algebroid and $L\to P$ a vector
subbundle of $A$ over a submanifold $P$ of $M$. We denote by
$L^\bot\to P$ the vector subbundle of $A$ over $P$ whose fiber over
any $x\in P$ is $L^\bot_x:=\set{e_x\in E_x|\; \forall e'_x\in L_x,
\; \eval{e_x, e'_x}=0}$. The conormal bundle $\nu^*(P)\to P$ is
defined by $\nu_x^*(P):=(T_xP)^0=\set{\al_x\in E^*_x|\; \forall
e_x\in L_x, \; \al_x(e_x)=0}$.

\begin{thm}\label{thm:dirac:Jacobi:direct:sum}
Let $E=A\oplus A^*$ be the double of a quasi-Jacobi bialgebroid
$(A,\phi,\tilde\d_*,X_A)$ over the  manifold $M$, $L\to P$ a
vector subbundle of $A$ over a submanifold $P$ of $M$, $F=L\oplus
L^\bot$. Then $F$ is a Dirac structure supported on $P$ if and
only if the following conditions hold:
\begin{itemize}
 \item[1)] $L$ is a Lie subalgebroid of $A$;
 \item[2)] $L^\bot$ is closed for the bracket on $A^*$ defined by $\d_*$;
 \item[3)] $L^\bot$ is compatible with the anchor, i.e., $\rho_{|_P}(L^\bot)\subset TP \times \Rr$;
 \item[4)] $X_{A|_P} \in \Gamma(\wedge^3 L)$.
 \end{itemize}
\end{thm}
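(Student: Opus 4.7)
The plan is to verify each of the three Dirac conditions D1)--D3) separately and match it with the four conditions of the theorem, exploiting the direct--sum decomposition $F = L \oplus L^\bot \subset A \oplus A^* = E$ to split every verification into an $A$--piece and an $A^*$--piece.

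First, D1) is automatic: the pairing $\eval{X+\al,Y+\be} = \tfrac{1}{2}(\al(Y) + \be(X))$ vanishes on $F \times F$ by the very definition of $L^\bot$, and a rank count gives $\rank F = \rank L + (\rank A - \rank L) = \rank A = \tfrac{1}{2}\rank E$. For D2), the anchor $\rho = \rho_A^\phi + \rho_{A^*}^W$ sends $X + \al$ to $(\rho_A(X) + \rho_{A^*}(\al),\, \eval{\phi,X} + \eval{W,\al})$, whose second coordinate always lies in $\Rr$; hence $\rho(F) \subset TP \times \Rr$ decomposes into the two tangentiality requirements $\rho_A(L) \subset TP$ (the anchor part of 1)) and $\rho_{A^*}(L^\bot) \subset TP$, i.e. condition 3).

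The substantial step is D3). Picking sections $X_i + \al_i \in \Gamma(E)$ whose restrictions to $P$ lie in $F$, the bracket formula of Example \ref{double:quasi:Jacobi:bialgebroid} gives an $A$--component
\[
[X_1,X_2]^\phi_A + \tilde\Lie_{*\al_1} X_2 - i_{\al_2} \tilde\d_* X_1 + X_A(\al_1,\al_2,-)
\]
and an $A^*$--component $[\al_1,\al_2]^W_* + \Lie^\phi_{X_1}\al_2 - i_{X_2}\d^\phi\al_1$. Membership of the $A$--component in $L$ along $P$ is tested by pairing with arbitrary $\be \in \Gamma(A^*)$ having $\be|_P \in L^\bot$, and membership of the $A^*$--component in $L^\bot$ is tested against any $Y \in \Gamma(A)$ with $Y|_P \in \Gamma(L)$ via the Cartan formula $\eval{\Lie^\phi_X\al, Y} = \rho^\phi(X)\eval{\al, Y} - \eval{\al, [X,Y]^\phi_A}$. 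Since the scalar functions $\eval{\al_i, Y}$ and $\eval{\be, X_i}$ vanish on $P$, their derivatives along $\rho_A(X_i)|_P \subset TP$ and $\rho_{A^*}(\al_j)|_P \subset TP$ (by 1) and 3)) again vanish on $P$, while $[X_i,Y]^\phi_A|_P \in L$ by the subalgebroid part of 1). Hence the derivative cross--terms die on restriction, leaving $[\al_1,\al_2]^W_*|_P \in L^\bot$ (condition 2)) and the purely tensorial requirement $X_A(\al_1,\al_2,-)|_P \in L$; by non--degeneracy of the canonical pairing between $L^\bot$ and $A/L$ the latter is equivalent to $X_{A|_P} \in \Gamma(\wedge^3 L)$, which is condition 4).

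The main obstacle I anticipate is the necessity direction, where each of the four conditions must be extracted from the single closure hypothesis. My strategy is to specialize the test sections: setting $\al_i \equiv 0$ reduces the $A$--part to $[X_1,X_2]^\phi_A$ and forces the bracket portion of 1); setting $X_i \equiv 0$ leaves $X_A(\al_1,\al_2,-)$ together with $[\al_1,\al_2]^W_*$, whose tensorial and first--order summands are distinguished by $C^\infty(M)$--linearity in the $\al_i$, producing 4) first and 2) afterwards; mixed choices of $X_i$ and $\al_j$ finally isolate the anchor constraint 3). A minor technical point is the need to realise enough local extensions of sections of $L$ and $L^\bot$ off $P$, which follows from standard tubular--neighbourhood arguments on the vector bundles $A$ and $A^*$.
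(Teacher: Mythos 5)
Your proof follows essentially the same route as the paper's: D1) is automatic for $L\oplus L^\bot$, D2) splits into the two anchor conditions, and D3) is verified term by term in the explicit bracket of Example \ref{double:quasi:Jacobi:bialgebroid}, testing membership in $L$ (resp.\ $L^\bot$) against sections restricting to $L^\bot$ (resp.\ $L$), and killing all derivative cross--terms because the functions $\eval{\al_i,Y}$, $\eval{\be,X_i}$ vanish on $P$ while the relevant anchors are tangent to $P$. This is exactly the paper's sufficiency argument (the paper additionally spells out the $W$-- and $\phi$--corrections coming from $\tilde\d_*$ and $\d^\phi$, which your ``cross--terms die'' remark covers). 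You are in fact more explicit than the paper about the necessity direction, which the paper dismisses as immediate.

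There is, however, one step that is wrong as stated: the claim that ``$X_A(\al_1,\al_2,-)|_P\in L$ for all $\al_1,\al_2$ restricting to $L^\bot$'' is \emph{equivalent}, by nondegeneracy of the pairing between $L^\bot$ and $A/L$, to $X_{A}|_P\in\Gamma(\wedge^3L)$. Testing $X_A(\al_1,\al_2,-)$ against a third section $\al_3$ with $\al_3|_P\in L^\bot$ shows that the former condition is equivalent to $X_A(\al_1,\al_2,\al_3)|_P=0$ for all such $\al_i$, i.e.\ to the vanishing of the image of $X_{A}|_P$ in $\wedge^3\bigl(A|_P/L\bigr)$, which is the condition $X_{A}|_P\in\Gamma\bigl(L\wedge\wedge^2 A|_P\bigr)$. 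This is strictly weaker than $X_{A}|_P\in\Gamma(\wedge^3 L)$: for $l\in L$ and $c_1,c_2$ spanning a complement, $l\wedge c_1\wedge c_2$ satisfies the former but not the latter. Consequently your specialization $X_i\equiv 0$ in the necessity direction only extracts the weaker membership, and the ``only if'' half of condition 4) does not follow from the argument as written (the ``if'' half is unaffected, since $\wedge^3L\subset L\wedge\wedge^2A$). To be fair, the paper's own proof has the identical gap --- it asserts necessity is immediate --- and its Corollary on $\mathcal{E}^1_{\omega}(M)$ (where $i^*\omega=0$ and $i^*\d\omega=0$ only place $(\d\omega,\omega)|_P$ in the annihilator of $\wedge^3(TP\times\Rr)$, not in the third exterior power of the conormal part) is in fact consistent with the weaker condition; but since you explicitly invoke the equivalence, you should either weaken condition 4) accordingly or supply an argument for why closure forces the full $\wedge^3 L$ membership.
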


\begin{proof}
Since $F=L\oplus L^\bot$, this  is  a Lagrangian subbundle of $E$.
If $F$ is a  Dirac structure supported on $P$, then we immediately
see that the stated four conditions are satisfied.

Conversely, suppose $L$ is a Lie subalgebroid of $A$,
$L^\bot\subset A^*$ is closed  for $\br_{A^*}$,
$\rho_{|_P}(L^\bot)\subset TP\times \Rr$ and
$X_{A|_P}\in\Gamma(\wedge^3 L)$. Obviously $F$ is  compatible with
the anchor, i.e, $\rho_{|_P}(L\oplus
L^\bot)=\rho_{|_P}(L)+\rho_{|_P}(L^\bot)\subset TP\times \Rr$. We
are left to prove that $F$ is closed with respect to the bracket
on $E$. Let $X, Y\in \Gamma(A)$ and $\al, \be\in \Gamma(A^*)$ such
that $X+\al$ and $Y+\be$ restricted to $P$ are sections of $F$. By
definition, the bracket on $E$ is given by
 \begin{align*}
(X+\al)\smc (Y+\be)  &= \brr{X,Y}_A+i_{\al} \d_* Y -i_{\be} \d_* X +\d_*\left(\al(Y)\right)+X_A(\al,\be, -)\\
& + \eval{\alpha,W}Y  -\eval{\be,W}X + \eval{\beta, X}W   \\
&+ \brr{\al,\be}_{*}+\Lie_{X} \be-i_Y \d \al + \eval{X,\phi}\be -
\eval{Y,\phi} \al+\eval{\al,Y}\phi\,
\end{align*}
where $W=\tilde \d_* 1$.

By hypothesis, we immediately have  that
$$\brr{X,Y}_{A|_P}=\brr{X_{|_P},Y_{|_P}}_L\in \Gamma(L), $$
$$\brr{\al,\be}_{A^*|_P}=\brr{\al_{|_P},\be_{|_P}}_{L^{\bot}}\in \Gamma(L^\bot), $$
$$\eval{\al,Y}_{|_P}\phi_{|_P}+\eval{\be,X}_{|_P} W_{|_P}=0,$$
$$\eval{\alpha,W}Y_{|P}-\eval{\be,W}X_{|P} + \eval{X,\phi}\be_{|P}- \eval{Y,\phi} \al_{|P} \in F $$
\mbox{and} $$X_A(\al,\be, -)_{|_P}\in   \Gamma(L).$$

Now, notice that $\al(Y)_{|_P}=0$, so $\d \al(Y)_{|_P}\in
\nu^*(P)=(TP)^0$. Since $\rho_{A^*|_P}(L^\bot)\subset TP$, we have
that
$$\d_{*} \al(Y)_{|_P}=\rho^*_{A^*|_P}d\al(Y)_{|_P}\in \Gamma(L).$$
Analogously, $\ds \d \be(X)_{|_P}=\rho^*_{A}d\be(X)_{|_P}\in
\Gamma(L^\bot).$

Also, $$\d_* Y(\al,\be)_{|_P}=\left(\rho_{A^*}(\al)\cdot \be(Y)-
\rho_{A^*}(\be)\cdot
\al(Y)-\brr{\al,\be}_{A^*}(Y)\right)_{|_P}=0,$$ so $i_{\al}\d_*
Y\in \Gamma(L)$. Analogously, $\ds i_\be\d_* X\in \Gamma(L)$ and
$\ds i_{X}\d \be, \, i_{Y}\d \al\in \Gamma(L^{\bot})$.

All these conditions allow us to say that $(X+\al)\smc
(Y+\be)_{|P} \in \Gamma(L\oplus L^\bot)$ and, consequently, $F$ is
a Dirac structure supported on $P$.
\end{proof}

\begin{cor}
Let $E=A\oplus A^*$ be the double of a Jacobi bialgebroid then
$F=L\oplus L^\bot$ is a Dirac structure supported on $P$ if and
only if $L$ and $L^\bot$ are Lie subalgebroids of $A$ and $A^*$.
\end{cor}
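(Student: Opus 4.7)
The plan is to derive this directly from Theorem \ref{thm:dirac:Jacobi:direct:sum} by specializing to the Jacobi bialgebroid case, in which the 3-section $X_A$ vanishes identically. Condition 4) of the theorem, namely $X_{A|_P}\in\Gamma(\wedge^3 L)$, then holds trivially, so $F=L\oplus L^\bot$ is a Dirac structure supported on $P$ if and only if conditions 1), 2), and 3) of that theorem are satisfied.

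Next, I would unpack what the phrase ``$L^\bot$ is a Lie subalgebroid of $A^*$'' means and match it with conditions 2) and 3). Being a Lie subalgebroid requires both closure under the bracket $\br_{A^*}$ determined by $\d_*$ (which is precisely condition 2) and compatibility with the anchor of $A^*$, that is $\rho_{A^*}(L^\bot)\subset TP$. The anchor of the Courant-Jacobi algebroid $E=A\oplus A^*$ restricted to $A^*$ is $\rho^W_{A^*}(\al)=(\rho_{A^*}(\al),\eval{W,\al})$, whose second component automatically lies in $\Rr$. Hence condition 3), $\rho_{|_P}(L^\bot)\subset TP\times\Rr$, reduces exactly to $\rho_{A^*}(L^\bot)\subset TP$. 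So the conjunction of 2) and 3) is equivalent to $L^\bot$ being a Lie subalgebroid of $A^*$.

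Finally, condition 1) is by definition the statement that $L$ is a Lie subalgebroid of $A$ (including, as always, compatibility of $L$ with the anchor $\rho_A$, which is in turn equivalent to the compatibility of $L$ with the Courant-Jacobi anchor $\rho^\phi_A$ since the second component $\eval{\phi,X}$ is automatically in $\Rr$). Putting these observations together yields the stated equivalence. I do not expect any real obstacle: the only point that requires mild attention is the automatic satisfaction of the $\Rr$-component of the anchor compatibility, which follows simply because both $\eval{\phi,X}$ and $\eval{W,\al}$ are real-valued functions on $M$.
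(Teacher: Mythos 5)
Your proposal is correct and follows exactly the route the paper intends: the corollary is the specialization of Theorem \ref{thm:dirac:Jacobi:direct:sum} to $X_A=0$, where condition 4) becomes vacuous and conditions 1)--3) repackage precisely as ``$L$ and $L^\bot$ are Lie subalgebroids of $A$ and $A^*$.'' Your remark that the $\Rr$-component of the anchor condition is automatic is a correct and harmless clarification.
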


Notice that when $\phi=0$ and $W=0$, then $E=A\oplus A^*$ is the
double of a Lie bialgebroid. Then, if we have also $P=M$, i.e. the
Dirac structure is global, we obtain Proposition 7.1 of
\cite{LiuWeiXu}.

\begin{cor}
Let $ \mathcal{E}_{\omega}^1(M)=(TM\times \Rr)\oplus (T^*M\times
\Rr)$ be the standard Courant-Jacobi algebroid twisted by the
$3$-form $(\d \omega,\omega ) \in \Gamma(\wedge^{3}(T^*M \times
\Rr))$ and $i:P\hookrightarrow M$ a submanifold of $M$.
 Then
$F=(TP\times \Rr)\oplus (TP\times \Rr)^\bot$ is a Dirac structure of
$\mathcal{E}_{\omega}^1(M)$ supported on $P$ if and only if $i^*
\omega=0$.
\end{cor}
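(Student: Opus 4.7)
My plan is to verify the three defining properties (D1)--(D3) of a Dirac structure supported on $P$ directly, since the twist $(\d\omega,\omega)$ lives in $\Gamma(\wedge^3(T^*M\times\Rr))$ rather than in $\Gamma(\wedge^3(TM\times\Rr))$ and so Theorem~\ref{thm:dirac:Jacobi:direct:sum} does not apply verbatim. I would first identify $(TP\times\Rr)^\bot$ using the pairing of Example~\ref{double:quasi:Jacobi:bialgebroid}: it is the annihilator of $TP\times\Rr$ in $T^*M\times\Rr$, namely $\nu^*(P)\times\{0\}$. Property (D1) then follows from the count $\dim(TP\times\Rr)+\dim(\nu^*(P)\times\{0\}) = \dim M + 1 = \tfrac12\rank \mathcal{E}^1_\omega(M)$ together with the fact that $TM\times\Rr$ and $T^*M\times\Rr$ are transverse Lagrangian subbundles of $\mathcal{E}^1_\omega(M)$. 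Property (D2) is immediate, since the anchor projects onto the first summand and $\rho(F)|_P = TP\times\Rr\subset TP\times\Rr$.

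The substantive work is property (D3). Given sections $e_i = (X_i,f_i)+(\al_i,g_i)$ for $i=1,2$ of $\mathcal{E}^1_\omega(M)$ whose restrictions to $P$ lie in $F$, I would decompose the twisted bracket $e_1\smc_\omega e_2$ into the untwisted bracket from Example~\ref{double:quasi:Jacobi:bialgebroid} plus the twist contribution $(\d\omega,\omega)((X_1,f_1),(X_2,f_2),-)\in\Gamma(T^*M\times\Rr)$. A direct check using the untwisted formula yields $e_1\smc e_2|_P\in\Gamma(F)$: this reduces to the involutivity of $TP$ in $TM$ and to the fact that Lie derivatives along vector fields tangent to $P$ preserve $1$-forms vanishing on $TP$. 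Hence $F$ is preserved by $\smc_\omega$ at $P$ precisely when the twist term lies in $\nu^*(P)\times\{0\}$ on $P$. Unpacking the natural trilinear action of $(\d\omega,\omega)$ on sections of $TM\times\Rr$, its $\Rr$-component is $\omega(X_1,X_2)$ while its $T^*M$-component is $\d\omega(X_1,X_2,-) + f_1\,\omega(X_2,-) - f_2\,\omega(X_1,-)$.

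Requiring the $\Rr$-component to vanish for all $X_1,X_2\in\Gamma(TP)$ is exactly the condition $i^*\omega=0$, giving the necessary direction. Conversely, assuming $i^*\omega=0$, one has $i^*\d\omega = \d(i^*\omega) = 0$ and each term $f_i\,\omega(X_j,Y)$ with $X_j,Y\in TP$ vanishes, so the $T^*M$-component of the twist term is killed by every $Y\in T_pP$ for $p\in P$, placing it in $\nu^*(P)$; this yields the sufficient direction. The main obstacle is the explicit computation of the twist contribution and the verification that the untwisted bracket preserves $F$; both require careful bookkeeping in the (relatively long) Courant-Jacobi bracket formula on $\mathcal{E}^1(M)$ recorded in Example~\ref{double:quasi:Jacobi:bialgebroid}.
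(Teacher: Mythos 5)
Your argument is correct, but it takes a genuinely different, more computational route than the paper. The paper disposes of the corollary in three lines by applying Theorem~\ref{thm:dirac:Jacobi:direct:sum}: one regards $\mathcal{E}^1_\omega(M)$ as the double of the quasi-Jacobi bialgebroid whose underlying Jacobi algebroid is $T^*M\times\Rr$ with the \emph{null} structure and whose $3$-section is $X_A=(\d\omega,\omega)\in\Gamma(\wedge^{3}(T^*M\times\Rr))$, so that $L=(TP\times\Rr)^\bot$ and $L^\bot=TP\times\Rr$; conditions 1)--3) of the theorem are then automatic (the bracket on $T^*M\times\Rr$ is zero and $TP$ is involutive), and condition 4) becomes $(\d\omega,\omega)_{|_P}\in\Gamma(\wedge^{3}(TP\times\Rr)^\bot)$, which is equivalent to $i^*\omega=0$ since $i^*\d\omega=\d(i^*\omega)$. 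So your premise that the theorem ``does not apply verbatim'' is only literally true: it does apply once the roles of the two summands are exchanged, which is what the paper does implicitly. Your direct verification of D1)--D3) is a valid substitute and everything you assert checks out: the identification $(TP\times\Rr)^\bot=\nu^*(P)\times\{0\}$, the rank count for D1), the decomposition of the twist term into the $\Rr$-component $\omega(X_1,X_2)$ and the $T^*M$-component $\d\omega(X_1,X_2,-)+f_1\,\omega(X_2,-)-f_2\,\omega(X_1,-)$, and the observation that the $\Rr$-component alone forces $i^*\omega=0$ while $i^*\omega=0$ kills both components on $TP$. What your route buys is self-containedness (no need to recognize the twisted double structure ``on the dual side''); what it costs is the explicit check that the untwisted bracket preserves $F$, where your sketch should also account for the terms involving $g_1,g_2$ (namely $g_2\,\d f_1+f_2\,\d g_1$ and $X_1(g_2)-X_2(g_1)+i_{X_2}\al_1+f_1g_2$), not only the Lie-derivative terms; these do land in $F$ because $g_1,g_2$ vanish along $P$ and $X_1,X_2$ are tangent to $P$, but the bookkeeping deserves to be made explicit.
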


\begin{proof}
Since the Jacobi algebroid structure on $T^*M\times \Rr$ is the
null structure, we immediately have the three first conditions of
Theorem \ref{thm:dirac:Jacobi:direct:sum}. Condition 4) is ensured
by  $i^*\omega=0$ because, in this case, we also have $i^* \d
\omega=0$ and these two conditions are equivalent to $(\d
\omega,\omega)\in \Gamma(\wedge^3 (TP\times \Rr)^\bot)$.
\end{proof}

\begin{defn}
A \emph{Courant-Jacobi algebroid morphism} between two
Courant-Jacobi algebroids $E\to M$ and $E'\to M'$ is a Dirac
structure in $E\times \overline E'$ supported on $\graf \phi$,
where $\phi:M\to M'$ is a smooth map and $\overline E'$ denotes
the Courant-Jacobi algebroid obtained from $E'$ by changing the
sign of the bilinear form.
\end{defn}

If $E$ and $E'$ are in particular Courant algebroids, then a Dirac
structure in $E\times \overline E'$ supported on $\graf \phi$
defines a Courant algebroid morphism \cite{AleXu,BurIglSev}. Several
examples of Courant-Jacobi morphisms, extending the known ones for
Courant algebroids, appear naturally (see for instance
\cite{LiMein08}).

\begin{ex}
An orthogonal vector bundle isomorphism $\Phi:E\to E'$ covering a
diffeomorphism $\phi:M\to M'$ such that $\ds
\Phi\brr{X,Y}_E=\brr{\Phi X,\Phi Y}_{E'}$ and $\rho_{E'}\Phi(X)=(
\phi_*, id)\rho_E(X)$ defines an isomorphism between Courant-Jacobi
algebroids.
\end{ex}

\begin{ex}
A Courant-Jacobi morphism from $E$ to the zero Courant algebroid
over a point is the same as a Dirac structure of the Courant-Jacobi
algebroid $E$.
\end{ex}

\begin{ex}
Given a smooth map $\phi:M\to M'$ there is  a standard
Courant-Jacobi morphism between $\mathcal{E}^1(M)=(TM\times
\Rr)\oplus(T^*M\times \Rr)$ and $\mathcal{E}^1(M')=(TM'\times
\Rr)\oplus(T^*M'\times \Rr)$ given by
\[
L_\phi=\set{ \left((X,\phi^*f) + (\phi^*\al,\phi^*g)\, , \, (\phi_*
X, f) + (\al,g) \right)| \,
X\in\mathfrak{X}^1(M),\,\al\in\Omega^1(M'),\, f,g\in C^\infty(M')}.
\]
\end{ex}

\begin{ex}
Let $E\to M$ be a Courant-Jacobi algebroid and $\phi:M\to M\times M$
the diagonal map. The vector subbundle of $E\times\overline{E}\times
\overline{\mathcal{E}^1(M)}$ over $\graf \phi$,
\[L=\set{(X, X-\rho_E^*(\al,f), \rho_E(X)+ (\al,f))|\, X\in\Gamma(E), (\al,f)\in\Gamma(T^*M\times \Rr)}\]
is a Courant-Jacobi morphism between $E\times\overline{E}$ and
$\mathcal{E}^1(M)$.
\end{ex}

\begin{thm}\label{thm:quasi:Courant-Jacobi:morph}
Let $E_1=A\oplus A^*$ and $E_2=B\oplus B^*$ be doubles of
quasi-Jacobi bialgebroids $(A,\phi_A,\tilde\d_{*A},X_A)$ and
$(B,\phi_B,\tilde\d_{*B},X_B)$ and $(\Phi,\phi):A\to B$ a
quasi-Jacobi bialgebroid morphism, then
\[
F=\set{(a+\Phi^*b^*,\Phi a+ b^*)|\, a\in A_x \mbox{ and } b^*\in
B^*_{\phi(x)}, x\in M}\subset E_1\times \overline{E_2}
\]
is a Dirac structure supported on $\graf \phi$, i.e. $F$ is a
Courant-Jacobi algebroid morphism.
\end{thm}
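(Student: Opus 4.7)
The plan is to verify the three defining properties D1--D3 for $F$ to be a Dirac structure of $E_1\times\overline{E_2}$ supported on $\graf\phi$. First, I observe that $F$ is a smooth vector subbundle of $(E_1\times\overline{E_2})_{|\graf\phi}$ of rank $\rank A+\rank B$, which is precisely half of $\rank(E_1\times\overline{E_2})$. Maximal isotropy (D1) reduces to a direct computation: the pairing on $E_1\times\overline{E_2}$ is the difference of the two factor pairings, and for two elements of $F$ the two halves
\[
\tfrac12\bigl(\Phi^*b_1^*(a_2)+\Phi^*b_2^*(a_1)\bigr)\quad\text{and}\quad \tfrac12\bigl(b_1^*(\Phi a_2)+b_2^*(\Phi a_1)\bigr)
\]
cancel by the defining relation $\Phi^*b^*(a)=b^*(\Phi a)$; together with the rank count this yields maximality at each point.

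For compatibility with the anchor (D2), I would evaluate $\rho_{E_1}\oplus(-\rho_{E_2})$ on a generic element of $F$ and split the result into tangent and $\Rr$ components. The tangent parts $\rho_A(a)+\rho_{A^*}(\Phi^*b^*)$ and $\rho_B(\Phi a)+\rho_{B^*}(b^*)$ are $T\phi$-related because $\Phi$ is a Lie algebroid morphism (for the $\rho_A$, $\rho_B$ piece) and by condition 3) of Definition~\ref{def:quasi:Jacobi:morphism} (for the $\rho_{A^*}$, $\rho_{B^*}$ piece). The $\Rr$-components agree pointwise thanks to $\Phi^*\phi_B=\phi_A$ (Proposition~\ref{prop:Jacobi:morphism} ii)) and to condition 5), $\Phi W_A=W_B\smc\phi$, via the identity $\eval{W_A,\Phi^*b^*}=\eval{\Phi W_A,b^*}$. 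Hence $\rho_{E_1\times\overline{E_2}}(F)\subset T(\graf\phi)\times\Rr$.

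The main obstacle is closure under the bracket (D3). Choosing smooth extensions, a section of $F$ can be written as $(X+\Phi^*\be,\,\Phi X+\be)$ with $X\in\Gamma(A)$, $\be\in\Gamma(B^*)$, and I would compute the $E_1$- and $E_2$-brackets of two such sections by using the explicit formula from Example~\ref{double:quasi:Jacobi:bialgebroid}. The task reduces to verifying, term by term, that along $\graf\phi$ the $A$-component of the $E_1$-bracket is $\Phi$-related to the $B$-component of the $E_2$-bracket, and that the $A^*$-component coincides with $\Phi^*$ of the $B^*$-component. Every condition of Definition~\ref{def:quasi:Jacobi:morphism} is used: the Lie algebroid morphism property controls the $\br_A^{\phi_A}$, $\br_B^{\phi_B}$ terms together with the associated Lie derivatives and contractions $\Lie^{\phi}$, $i_Y\d^{\phi}$; condition 2) handles the dual brackets $\br_*$; condition 3) combined with $\Phi^*\phi_B=\phi_A$ intertwines the operators $\tilde\Lie_{*}$ and $i_\bullet\tilde\d_*$; condition 4) matches the $X_A(\cdot,\cdot,-)$ and $X_B(\cdot,\cdot,-)$ correction terms; and condition 5) handles the $W_A$, $W_B$ occurrences coming from the $\phi$-twist in~\eqref{Jacobibracket}. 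The main difficulty will be the systematic bookkeeping of all these terms and the careful use of $\Phi^*b^*(a)=b^*(\Phi a)$ to transfer expressions from $A$ to $B$ along $\graf\phi$.
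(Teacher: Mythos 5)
Your strategy --- verifying D1--D3 head-on with the explicit bracket of Example~\ref{double:quasi:Jacobi:bialgebroid} --- is viable but genuinely different from the paper's. The paper does not touch the bracket formula at this point: it sets $L=\graf\Phi\subset A\times B$ and $L^\bot=\set{(\Phi^*b^*,-b^*)}\subset A^*\times \overline{B^*}$, observes that conditions 1)--5) of Definition~\ref{def:quasi:Jacobi:morphism} translate exactly into the four hypotheses of Theorem~\ref{thm:dirac:Jacobi:direct:sum} for the product quasi-Jacobi bialgebroid $(A\times B, A^*\times\overline{B^*})$ over $\graf\phi\simeq M$, and then transports the resulting Dirac structure $L\oplus L^\bot$ to $F$ via the isometry $b+b^*\mapsto b-b^*$. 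What that reduction buys is precisely the part you defer: all the ``systematic bookkeeping'' of the mixed terms ($i_\al\d_*Y$, $i_Y\d\al$, $\d_*(\al(Y))$, the $W$- and $\phi$-corrections, the $X_A(\al,\be,-)$ term) was already carried out once and for all in the proof of Theorem~\ref{thm:dirac:Jacobi:direct:sum}, so the present theorem becomes a two-line corollary. Your D1 (isotropy plus rank count) and D2 (anchor compatibility from the Lie algebroid morphism property, condition 3), $\Phi^*\phi_B=\phi_A$ and condition 5)) are fine. But your D3 is only a plan, and it hides a real technical point: an element of $F$ is \emph{not} of the form $(X+\Phi^*\be,\Phi X+\be)$ for global sections, since $\Phi X$ and $\Phi^*\be$ are defined only along $\phi$; D3 requires you to take arbitrary sections $e_i\in\Gamma(E_1\times\overline{E_2})$ over $M\times N$ whose \emph{restrictions} to $\graf\phi$ land in $F$, compute the bracket of the extensions, and check that the answer restricted to $\graf\phi$ is again in $F$ independently of the choice of extension. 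This is exactly the kind of argument the paper's Theorem~\ref{thm:dirac:Jacobi:direct:sum} packages (e.g.\ the observations that $\al(Y)_{|_P}=0$ forces $\d\al(Y)_{|_P}\in\nu^*(P)$, etc.), and until you either reproduce that analysis or invoke that theorem, your D3 step is an outline rather than a proof.
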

\begin{proof}
Consider the following vector bundles over $M$ (or  $\graf
\phi\simeq M$)
$$L_{x}=\graf \Phi_x=\set{(a,\Phi_x a)|\, a\in A_x}\subset A_x\times B_{\phi(x)}$$
and
$$L_{x}^\bot=\set{(\Phi^*b^*, -b^*)|\, b^*\in B_{\phi(x)}^*}\subset A_x^*\times {B_{\phi(x)}^*}.$$

Since $\Phi$ is a Jacobi algebroid morphism, $L$  is clearly a Lie
subalgebroid of $A\times B$ (when seen as a vector bundle over
$\graf \phi\simeq M$). Analogously, we can also conclude that
$L^\bot$ is closed for the bracket on  $A^*\times \overline{B^*}$
(where $\overline{B^*}$ denotes the vector bundle $B^*$ with bracket
$-\br_{B^*}$) and it is compatible with the  anchor $\rho_{A^*\times
\overline{B^*}}=(\rho_{A^*},-\rho_{B^*})$. Since $\Phi X_A=X_B$, we
conclude that $L\oplus L^\bot$  is a Dirac structure  supported on
$\graf \phi$ of the double $(A\times B)\oplus (A^*\times
\overline{B^*})$ which is the Courant-Jacobi algebroid $(A\oplus
A^*)\times (B\oplus \overline{B^*})$. Finally, observe that the
vector bundle morphism $b+ b^*\mapsto b-b^*$ induces a canonical
isomorphism between $F$ and $L\oplus L^\bot$ and the result follows.
\end{proof}

\begin{cor}
Let $E_1=A\oplus A^*$ and $E_2=B\oplus B^*$ be doubles of
quasi-Lie bialgebroids $(A,\d_{*A},X_A)$ and $(B,\d_{*B},X_B)$ and
$(\Phi,\phi):A\to B$ a quasi-Lie bialgebroid morphism, then
\[
F=\set{(a+\Phi^*b^*,\Phi a+ b^*)|\, a\in A_x \mbox{ and } b^*\in
B^*_{\phi(x)}, x\in M}\subset E_1\times \overline{E_2}
\]
is a Courant algebroid morphism.
\end{cor}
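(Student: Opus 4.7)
The plan is to derive this corollary as a direct specialization of Theorem \ref{thm:quasi:Courant-Jacobi:morph}. The strategy is to observe that the quasi-Lie bialgebroid setting is precisely the ``trivial 1-cocycle, trivial section'' case of the quasi-Jacobi bialgebroid setting, and that under this specialization the doubles become Courant algebroids and the morphism notion collapses to that of a quasi-Lie bialgebroid morphism.

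First, I would recall the remark made after Definition \ref{def:quasi:Jacobi:bialgebroid}: a quasi-Jacobi bialgebroid $(A,\phi,\tilde\d_*,X_A)$ with $\phi=0$ and $W=0$ is exactly a quasi-Lie bialgebroid $(A,\d_*,X_A)$. Thus, starting from the data of the corollary, I may (and would) view $(A,0,\d_{*A},X_A)$ and $(B,0,\d_{*B},X_B)$ as quasi-Jacobi bialgebroids. Next I would invoke the paragraph following Definition \ref{def:quasi:Jacobi:morphism}, where it is explicitly noted that when $\phi=0$ and $W=0$ the notion of quasi-Jacobi bialgebroid morphism coincides with that of quasi-Lie bialgebroid morphism. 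Hence the given $(\Phi,\phi)$ is a quasi-Jacobi bialgebroid morphism between these trivially-extended structures.

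Next I would verify that in this trivial case the doubles $E_1=A\oplus A^*$ and $E_2=B\oplus B^*$ are Courant algebroids, not merely Courant-Jacobi algebroids. This follows from the anchor formula $\rho=\rho^\phi_A+\rho^W_{A^*}$ in Example \ref{double:quasi:Jacobi:bialgebroid}: setting $\phi=0$ and $W=0$ eliminates the twists and makes the image of $\rho$ lie in $TM\oplus 0$, so $\rho^*(0,1)=0$, and by the remark after Definition \ref{def:Courant-Jacobi} we recover a genuine Courant algebroid structure by projecting onto $TM$. The product $E_1\times\overline{E_2}$ is then itself a Courant algebroid.

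Finally, I would apply Theorem \ref{thm:quasi:Courant-Jacobi:morph} to conclude that $F$ is a Dirac structure of the Courant-Jacobi algebroid $E_1\times\overline{E_2}$ supported on $\graf\phi$. Because this ambient structure is actually a Courant algebroid, this Dirac structure supported on $\graf\phi$ is precisely what is meant by a Courant algebroid morphism in the sense of \cite{AleXu,BurIglSev}, and the corollary follows. I do not expect any substantive obstacle here; the entire argument is a bookkeeping specialization, and all the content resides in Theorem \ref{thm:quasi:Courant-Jacobi:morph} which has already been established.
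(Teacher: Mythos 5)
Your proposal is correct and matches the paper's intent: the corollary is stated without a separate proof precisely because it is the specialization of Theorem \ref{thm:quasi:Courant-Jacobi:morph} to the case $\phi=0$, $W=0$, which is exactly the bookkeeping argument you give. Your additional check that the doubles become genuine Courant algebroids (via the remark following Definition \ref{def:Courant-Jacobi}) is a worthwhile explicit step that the paper leaves implicit.
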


\section{Jacobi-quasi-Nijenhuis algebroids}              %
\label{sec:Jacobi:quasi:Nij}                             %

Let $(A,{\phi})$ be a Jacobi algebroid. Recall that the induced
Lie algebroid structure from $A$ by $\phi$ on the vector bundle
$\hat A=A\times \Rr$ over $M\times \Rr$ is defined by the anchor
\begin{equation}\label{anchorhatA}
    \hat\rho(X)=\rho(X)+\eval{{\phi},X} \frac{\p}{\p t}, \quad
    X\in\Gamma(A),
\end{equation}
and the bracket induced by $\br$ for  time independent
multivectors
\begin{equation}\label{brackethatA}
    \brr{P,Q}_{\hat A}=\brr{P,Q}, \quad P,Q\in\Gamma(\wedge^{\bullet}A).
\end{equation}
The differential in $\hat A$ induced by $(A,{\phi})$ is denoted by
$\hat\d$. The differential obtained in $\hat A$ when ${\phi}=0$ is
denoted by $\tilde\d$.

Let $(A,{\phi})$ be a Jacobi algebroid and suppose that a Jacobi
bivector is given on $A$, i.e., a bivector
$\pi\in\Gamma(\wedge^{2} A)$ such that $ \brr{\pi,\pi}^{\phi}=0. $
It follows that $\tilde\pi=e^{-t} \pi$ is a Poisson bivector on
$\hat A$ and, consequently, it defines a Lie algebroid structure
on $\hat A^\ast$ (over $M\times \Rr$)  given by
\begin{equation}\label{dualstructure:hatAdual}
    \brr{\alpha,\beta}_{\tilde \pi}=\hat\Lie_{\tilde
    \pi^\sharp\alpha}\beta-\hat{\Lie}_{\tilde \pi^\sharp\beta}\alpha-\hat\d
    \tilde{\pi}(\alpha,\beta),
\end{equation}
\begin{equation}
\hat\rho_\ast(\alpha)=\hat\rho\circ\tilde \pi^\sharp(\alpha)
\end{equation}
where $\alpha,\beta\in\Gamma(\hat A^\ast)$ and $\hat{\Lie}$ is the
Lie derivative in $\hat A$. In particular, for $\alpha,\,
\beta\in\Gamma(A^\ast)$, we have
\begin{equation}\label{relgauging2}
    \brr{e^t\alpha,e^t\beta}_{\tilde \pi}=e^t(\Lie^{\phi}_{\pi^\sharp \alpha}\beta - \Lie^{\phi}_{\pi^\sharp\beta}\alpha-\d^{\phi}
    \pi(\alpha,\beta)).
\end{equation}
The Lie bracket
\begin{equation}\label{bracketdualA}
    \brr{\alpha,\beta}_{\pi}={\Lie}^{\phi}_{\pi^\sharp \alpha}\beta - {\Lie}^{\phi}_{\pi^\sharp\beta}\alpha-\d^{\phi}
    \pi(\alpha,\beta),
\end{equation}
together with the anchor
\begin{equation}\label{anchordualA}
    \rho_\ast=\rho\circ \pi^\sharp,
\end{equation}
endows  $A^\ast$ with a Lie algebroid structure over $M$.

In order to introduce Jacobi quasi-Nijenhuis algebroids we recall
now the notion of torsion of a vector bundle map in a Jacobi
algebroid.

Let $(A,\phi)$, with $A=(A,\br,\rho)$, be a Jacobi algebroid over a
manifold $M$.  The torsion  of a vector bundle map $N:A\to A$ (over
the identity) is defined by
\begin{equation}
\label{eq:Nijenhuis} \T_N(X,Y):=[NX,NY]-N[X,Y]_N, \quad  X,Y\in
\Gamma(A),
\end{equation}
where $\br_N$ is given by:
\[
[X,Y]_N:=[NX,Y]+[X,NY]-N[X,Y],\quad X,Y\in \Gamma(A).
\]

When $\T_N=0$, the vector bundle map $N$ is called a \emph{Nijenhuis
operator}, the triple $A_N=(A,\br_N,\rho_N=\rho\smc N)$ is a new Lie
algebroid and $\phi_1=N^*\phi$ is a $1$-cocyle of $A_N$ so that
$(A_N,\phi_1)$ is a new Jacobi algebroid. Finally,
$N:(A_N,\phi_1)\to (A,\phi)$ is an example of  Jacobi algebroid
morphism.

Now assume that also a Jacobi bivector $\pi\in\Gamma(\wedge^{2}
A)$ is given on $(A,\phi)$.

\begin{defn}
On a Jacobi algebroid $(A,{\phi})$ with a Jacobi bivector
$\pi\in\Gamma(\wedge^2 A)$, we say that a vector bundle map $N:A\to
A$ is \emph{compatible} with $\pi$ if $N\pi^\sharp=\pi^\sharp N^*$
and the \emph{Magri-Morosi concomitant} vanishes:
\[\C(\pi,N)(\al,\be)= \brr{\al,\be}_{N\pi}-\brr{\al,\be}_\pi^{N^\ast}=0,\]
where $\br_{N\pi}$ is the  bracket defined by the bivector field
$N\pi\in\Gamma(\wedge^{2} A)$, and $\br_\pi^{N^\ast}$ is the Lie
bracket obtained from the Lie bracket $\br_\pi$ by deformation
along the  tensor $N^\ast$.
\end{defn}

\begin{defn}\label{def:Jacobi:quasi:Nij:alg}
A \emph{Jacobi quasi-Nijenhuis algebroid} $(A, \phi, \pi, N,
\varphi)$ is a Jacobi algebroid $(A,{\phi})$ equipped with a Jacobi
bivector $\pi$, a vector bundle map $N:A\to A$ compatible with $\pi$
and a   3-form $\varphi\in\Gamma(\wedge^{3} A^*)$ such that
$\d^{\phi} \varphi=0$  (i.e. $\varphi$ is $\d^{\phi}$-closed), and
\[
\T_N(X,Y)=-\pi^\sharp\left(i_{X\wedge Y} \varphi \right)\quad
\mbox{ and } \quad  \d^{\phi} \left(i_N\varphi\right)=0,
\]
where $i_N$ is the degree-zero derivation of $\Gamma( \wedge
^\bullet A^*)$ given, for the case of a $3$-form $\varphi$, by
$$i_N \varphi(X,Y,Z)=\varphi(NX,Y,Z)+\varphi(X,NY,Z)+\varphi(X,Y,NZ).$$
\end{defn}

In particular, when  $\phi=0$, we have a \emph{Poisson
quasi-Nijenhuis Lie algebroid} $(A, \pi, N, \varphi)$  \cite{Ant,
StiXu}.

\begin{prop}\label{prop:Jac_quasi_Nij:Pois_quasi_Nij}
If $(A, \phi, \pi, N, \varphi)$ is a Jacobi quasi-Nijenhuis
algebroid, then its Poissonization $(\hat A, e^{-t}\pi, N,
e^t\varphi)$ is a Poisson quasi-Nijenhuis Lie algebroid.
\end{prop}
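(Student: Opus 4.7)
The strategy is to unpack the definition of Poisson quasi-Nijenhuis Lie algebroid applied to $(\hat A, e^{-t}\pi, N, e^{t}\varphi)$ and match each condition, one at a time, to its counterpart on the original Jacobi quasi-Nijenhuis algebroid. The excerpt already states that $\tilde\pi = e^{-t}\pi$ is Poisson on $\hat A$, so three families of conditions remain: (i) the compatibility $N\tilde\pi^{\sharp} = \tilde\pi^{\sharp}N^{*}$ together with the vanishing of the Magri--Morosi concomitant $\C(\tilde\pi, N)$ on $\hat A$; (ii) the closedness $\hat\d(e^{t}\varphi) = 0$ and $\hat\d(i_{N}(e^{t}\varphi)) = 0$; and (iii) the torsion identity $\T_{N}(X,Y) = -\tilde\pi^{\sharp}(i_{X\wedge Y}(e^{t}\varphi))$ for the Lie algebroid $\hat A$.

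The enabling technical identity in the plan is $\hat\d(e^{t}\omega) = e^{t}\,\d^{\phi}\omega$ for any time-independent $\omega \in \Gamma(\wedge^{k}A^{*})$. I would derive it first: the anchor formula \eqref{anchorhatA} gives $\hat\d(e^{t}) = e^{t}\phi$, and \eqref{brackethatA} shows that on time-independent arguments $\hat\d$ acts as the ordinary Lie algebroid differential of $A$; combining these via the graded Leibniz rule recovers exactly the $\phi$-differential \eqref{phidiff}. Because $i_{N}$ is a fiberwise operation and commutes with multiplication by $e^{t}$, this identity applied to $\varphi$ and to $i_{N}\varphi$ converts the two Jacobi closedness hypotheses $\d^{\phi}\varphi = 0$ and $\d^{\phi}(i_{N}\varphi) = 0$ directly into (ii).

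For (iii), both sides are $C^{\infty}(M \times \Rr)$-bilinear in $(X, Y) \in \Gamma(\hat A)$, so it suffices to check them on time-independent sections of $A$, which generate $\Gamma(\hat A)$ over $C^{\infty}(M\times \Rr)$. For such arguments the bracket of $\hat A$ agrees with the bracket of $A$ by \eqref{brackethatA}, so the left-hand side equals $\T_{N}^{A}(X,Y)$; while the right-hand side simplifies as $-e^{-t}\pi^{\sharp}(e^{t}\,i_{X\wedge Y}\varphi) = -\pi^{\sharp}(i_{X\wedge Y}\varphi)$. The identity then reduces to the Jacobi quasi-Nijenhuis torsion hypothesis on $A$.

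Condition (i) carries the heaviest bookkeeping and is the main potential source of error. The equality $N\tilde\pi^{\sharp} = \tilde\pi^{\sharp}N^{*}$ is immediate, since both sides scale by $e^{-t}$ from $N\pi^{\sharp} = \pi^{\sharp}N^{*}$. For the concomitant, I would observe that \eqref{relgauging2} extends, with the same proof, from $\pi$ to an arbitrary bivector $\sigma \in \Gamma(\wedge^{2}A)$, giving $[e^{t}\alpha, e^{t}\beta]_{e^{-t}\sigma} = e^{t}[\alpha, \beta]_{\sigma}$ for $\alpha, \beta \in \Gamma(A^{*})$, where the right-hand bracket is defined from $\sigma$ by formula \eqref{bracketdualA}. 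Applying this with $\sigma = \pi$ and with $\sigma = N\pi$, and using that $N^{*}$ commutes with multiplication by $e^{t}$, I obtain $\C(\tilde\pi, N)(e^{t}\alpha, e^{t}\beta) = e^{t}\,\C(\pi, N)(\alpha, \beta) = 0$. The concomitant is $C^{\infty}$-bilinear (as the difference of two brackets with the same anchor), and the sections $e^{t}\alpha$ with $\alpha$ time-independent span $\Gamma(\hat A^{*})$ over $C^{\infty}(M\times \Rr)$, so the vanishing propagates to all of $\hat A^{*}$. No single step constitutes a genuine obstacle once the identity $\hat\d(e^{t}\omega) = e^{t}\,\d^{\phi}\omega$ and its bracket analogue are in hand; the main risk is simply mishandling the $e^{kt}$ weights.
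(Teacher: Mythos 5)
Your proof is correct and follows essentially the same route as the paper: the key identity $\hat\d(e^t\omega)=e^t\d^{\phi}\omega$ converts the two closedness hypotheses, and restricting to time-independent sections reduces the torsion identity on $\hat A$ to the one on $A$. The only difference is that you also verify explicitly the compatibility of $N$ with $e^{-t}\pi$ (including the vanishing of the Magri--Morosi concomitant via the gauging relation \eqref{relgauging2}), a condition the paper's proof leaves implicit in the statement that $(\hat A, e^{-t}\pi)$ is ``well known'' to be a Poisson Lie algebroid; making it explicit is a sound addition, not a deviation.
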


\begin{proof}
It is well known that $(\hat A, e^{-t}\pi)$ is a Poisson Lie
algebroid over $M\times \Rr$. Also we have that
\begin{equation*}
\hat \d (e^t\varphi)=e^t \d^\phi\varphi=0,
\end{equation*}
\begin{equation*}
\hat\d(i_Ne^t\varphi)=\hat\d(e^ti_N\varphi)=e^t\d^\phi
(i_N\varphi)=0
\end{equation*}
and
\begin{align*}
\T_N(X,Y)&=-\pi^\sharp\left(i_{X\wedge Y}\varphi \right)\\
&=-(e^{-t}\pi)^\sharp\left(i_{X\wedge Y} e^t\varphi \right),
\end{align*}
and we conclude that $(\hat A, e^{-t}\pi, N, e^t\varphi)$ is a
Poisson quasi-Nijenhuis Lie algebroid.
\end{proof}

When $(A, \phi)=(TM \times \Rr, (0,1))$ and ${\mathcal N} : TM
\times \Rr \to TM \times \Rr$ is the $C^\infty(M)$-linear map
given, for any section $(X,f)$ of $\Gamma(TM \times \Rr)$, by
${\mathcal N}(X,f)=(NX + fY, i_X \gamma+fg)$, where $N$ is a
$(1,1)$-tensor field on $M$, $Y \in {\mathfrak X}(M)$, $\gamma \in
\Omega^{1}(M)$ and $g \in C^\infty(M)$, we may introduce the
notion of Jacobi quasi-Nijenhuis manifold.

\begin{defn}\label{def:Jacobi:quasi:Nij:mfd}
A \emph{Jacobi quasi-Nijenhuis manifold} is a Jacobi manifold
$(M,\Lambda,E)$ together with a $C^\infty(M)$-linear map
${\mathcal N}:= (N,Y, \gamma, g) : TM \times \Rr \to TM \times
\Rr$  compatible with $(\Lambda,E)$ and a $2$-form $\omega \in
\Omega^{2}(M)$ such that
\[
\T_{\mathcal N}=-(\Lambda,E)^\sharp \circ (\d \omega, \omega) \;
\mbox{ and } \;  \d^{(0,1)} \left(i_{\mathcal N}(\d \omega,
\omega)\right)=0.
\]
\end{defn}

In the  case where $\omega=0$, $(M, (\Lambda,E), {\mathcal N})$ is
a \emph{Jacobi-Nijenhuis manifold} \cite{jf-lsjn}.

\

Let us now consider the Poissonization $(\hat{M}, \hat{\Lambda})$
of the Jacobi manifold $(M,\Lambda,E)$, i.e. $\hat{M}=M \times
\Rr$ and $\hat{\Lambda}= e^{-t} (\Lambda +
\frac{\partial}{\partial t} \wedge E)$, and the tensor field
$\hat{N}$ of type $(1,1)$ on $\hat{M}$ given by
$$\hat{N}= N+ Y \otimes \d t + \frac{\partial}{\partial t} \otimes \gamma + g \frac{\partial}{\partial t} \otimes \d t.$$

\begin{prop}\label{prop:Jac_quasi_Nij_mfd:Pois_quasi_Nij_mfd}
The quadruple $(M,(\Lambda,E), {\mathcal N}, \omega)$ is a Jacobi
quasi-Nijenhuis manifold if and only if $(\hat{M}, \hat{\Lambda},
\hat{N}, \hat{\d}(e^t \omega))$ is a Poisson quasi-Nijenhuis
manifold.
\end{prop}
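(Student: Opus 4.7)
The plan is to derive this proposition from the preceding algebroid-level result (Proposition \ref{prop:Jac_quasi_Nij:Pois_quasi_Nij}) together with a careful identification between the Poissonization of the Jacobi algebroid $(TM\times\Rr,(0,1))$ and the tangent Lie algebroid $T(M\times\Rr)$ of the Poissonized manifold $\hat M$, and then observing that every step in the correspondence is reversible.

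First, I would set up the dictionary. Under the canonical isomorphism $\widehat{TM\times\Rr} = (TM\times\Rr)\times\Rr \simeq T(M\times\Rr)$ as Lie algebroids over $\hat M=M\times\Rr$ (sending the canonical section $(0,1)$ of the extra $\Rr$ factor to $\partial/\partial t$, and using that the hatted anchor $\hat\rho(X,f)=X+f\,\partial/\partial t$ reproduces the tangent anchor), I claim:
\begin{itemize}
\item[(i)] the Jacobi bivector $\pi = \Lambda - \epsilon\wedge E$ on $A=TM\times\Rr$ corresponding to $(\Lambda,E)$ Poissonizes, via $\tilde\pi = e^{-t}\pi$, to the Poisson bivector $\hat\Lambda = e^{-t}(\Lambda+\partial/\partial t\wedge E)$ on $\hat M$;
\item[(ii)] the $C^\infty(M)$-linear endomorphism $\mathcal N=(N,Y,\gamma,g)$ of $TM\times\Rr$ extends uniquely to an endomorphism of $\widehat{TM\times\Rr}$, which under the isomorphism above is precisely the $(1,1)$-tensor $\hat N$ given in the statement;
\item[(iii)] the $\d^{(0,1)}$-closed 3-form $\varphi=(\d\omega,\omega)\in\Gamma(\wedge^3(T^*M\times\Rr))$ associated with the 2-form $\omega\in\Omega^2(M)$ corresponds, after multiplying by $e^t$ as in Proposition \ref{prop:Jac_quasi_Nij:Pois_quasi_Nij}, to the de~Rham 3-form $\hat\d(e^t\omega)=e^t(\d t\wedge\omega+\d\omega)$ on $\hat M$.
\end{itemize}

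Second, I would translate the four structural conditions through this dictionary. The compatibility $\mathcal N(\Lambda,E)^\sharp=(\Lambda,E)^\sharp\mathcal N^*$ together with the vanishing of the Magri--Morosi concomitant for the pair $(\pi,\mathcal N)$ on $A$ are equivalent to $\hat N\hat\Lambda^\sharp=\hat\Lambda^\sharp\hat N^*$ and the vanishing of the concomitant for $(\hat\Lambda,\hat N)$ on $\hat M$; the torsion identity $\T_{\mathcal N}=-(\Lambda,E)^\sharp\circ(\d\omega,\omega)$ translates to $\T_{\hat N}=-\hat\Lambda^\sharp\circ\hat\d(e^t\omega)$ (using that $\T$ behaves as expected under the Poissonization of a vector bundle endomorphism); the condition $\hat\d\bigl(\hat\d(e^t\omega)\bigr)=0$ is automatic; and $\d^{(0,1)}\bigl(i_{\mathcal N}(\d\omega,\omega)\bigr)=0$ becomes $\hat\d\bigl(i_{\hat N}\hat\d(e^t\omega)\bigr)=0$. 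Because each of these correspondences is an \emph{equivalence} (the Poissonization restricted to time-independent tensorial data is injective with identifiable image), the two sets of defining conditions are equivalent, yielding both directions of the ``if and only if''.

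The main obstacle will be step (iii): verifying that, under the algebroid isomorphism $\widehat{TM\times\Rr}\simeq T\hat M$, the multiplication by $e^t$ of the 3-form $(\d\omega,\omega)$ in $\wedge^3(T^*M\times\Rr)^*$ matches $\hat\d(e^t\omega)=e^t(\d t\wedge\omega+\d\omega)$ in $\Omega^3(\hat M)$, and that the interior product $i_{\mathcal N}$ on $A^*$ corresponds to $i_{\hat N}$ on $T^*\hat M$ in a way compatible with $\hat\d$. Once this differential-form identification is checked in coordinates, the translation of torsion and closedness conditions is a formal consequence, and the proposition follows.
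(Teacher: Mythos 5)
Your proposal is correct and follows essentially the same route as the paper: both arguments establish the equivalence condition by condition (compatibility of $\mathcal N$ with $(\Lambda,E)$, the torsion identity, and the closedness of $i_{\mathcal N}(\d\omega,\omega)$) by transporting each through the Poissonization and cancelling the nowhere-vanishing factor $e^t$. The only difference is one of packaging --- the paper computes directly on $\hat M$ (citing \cite{jf-lsjn} for the compatibility part and writing out $i_{\mathcal N}(\d\omega,\omega)$ explicitly), whereas you route everything through the Lie algebroid isomorphism $\widehat{TM\times\Rr}\simeq T\hat M$ and the dictionary behind Proposition \ref{prop:Jac_quasi_Nij:Pois_quasi_Nij}; the identification you flag as the main obstacle is indeed correct, since $\hat\d(e^t\omega)=e^t(\d t\wedge\omega+\d\omega)$ and the isomorphism sends the $1$-cocycle $(0,1)$ to $\d t$.
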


\begin{proof}
It is known \cite{jf-lsjn} that ${\mathcal N}$ is compatible with
$(\Lambda,E)$ if and only if $\hat{N}$ is compatible with
$\hat{\Lambda}$. Moreover, a direct computation shows that
$$\T_{\mathcal N}((X_1,f_1),(X_2,f_2))=- (\Lambda,E)^\sharp \left( (\d \omega, \omega)((X_1,f_1),(X_2,f_2), -)\right)$$
is equivalent to
$$\T_{\hat{ N}}\left(X_1+f_1 \frac{\partial}{\partial t},\, X_2+f_2\frac{\partial}{\partial t}\right)=- \hat{\Lambda}^\sharp\left(\hat{\d}(e^t
\omega)\left(X_1+f_1 \frac{\partial}{\partial t}, \,
X_2+f_2\frac{\partial}{\partial t}, -\right) \right),$$ for all
$X_i \in {\mathfrak X}^1(M)$, $f_i \in C^\infty(M), i=1,2$.

Finally, $$i_{\mathcal N}(\d \omega, \omega)=(i_N \d\omega, i_Y \d
\omega)+ (\gamma \wedge \omega, g \omega)+ (0, i_N \omega)$$ and
since

\begin{eqnarray*}
\lefteqn{\hat{\d}(i_{\hat{N}}\hat{\d}(e^t \omega))\left( X_1+f_1
\frac{\partial}{\partial t},X_2+f_2\frac{\partial}{\partial t},
X_3+f_3\frac{\partial}{\partial t}\right)=}\\
& =& e^t ( \d^{(0,1)}(i_N \d\omega, i_Y \d \omega) + \d^{(0,1)}(\gamma \wedge \omega, g \omega) \\
& & - (0,1) \wedge (\d i_N \omega, -\d i_Y
\omega))((X_1,f_1),(X_2,f_2) ,(X_3,f_3)),
\end{eqnarray*}
  the proof is completed.
\end{proof}

The main aim of this section is to prove that for any Jacobi
quasi-Nijenhuis algebroid there is an associated quasi-Jacobi
bialgebroid structure on its dual.

\begin{thm}\label{theor:quasi:Jacobi:bialg}
If $(A, \phi, \pi, N, \varphi)$ is a Jacobi quasi-Nijenhuis
algebroid, then $(A_\pi^*, W, \d_N^{N^*\phi}, \varphi)$ is a
quasi-Jacobi bialgebroid, where $W=-\pi^\sharp(\phi)$.
\end{thm}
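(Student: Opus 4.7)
My plan is to reduce the statement to the Poisson quasi-Nijenhuis case through the Poissonization of Proposition \ref{prop:Jac_quasi_Nij:Pois_quasi_Nij}, and then descend the resulting quasi-Lie bialgebroid structure back to a quasi-Jacobi one.

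First I would invoke Proposition \ref{prop:Jac_quasi_Nij:Pois_quasi_Nij} to regard $(\hat A, e^{-t}\pi, N, e^t\varphi)$ as a Poisson quasi-Nijenhuis Lie algebroid over $M\times\Rr$. The known result of \cite{Ant, StiXu} then provides a quasi-Lie bialgebroid structure on the dual $\hat A^*$ equipped with the Lie algebroid structure \eqref{dualstructure:hatAdual} induced by $e^{-t}\pi$: the degree-one derivation is a Nijenhuis-deformed operator $\hat \d_N$ acting on $\Gamma(\wedge^\bullet \hat A^*)$, and the distinguished 3-section is $e^t\varphi$, satisfying $\hat\d_N^{\,2}=[e^t\varphi,-]$ and $\hat\d_N(e^t\varphi)=0$.

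Next I would use the gauging relation \eqref{relgauging2} to identify the time-independent part of the bracket on $\hat A^*_{e^{-t}\pi}$ with the bracket \eqref{bracketdualA} on $A^*_\pi$ plus a correction generated by $W=-\pi^\sharp\phi$; this section $W\in\Gamma(A)=\Gamma((A^*_\pi)^*)$ is a $1$-cocycle of $A^*_\pi$, which follows from $[\pi,\pi]^\phi=0$. Similarly I would show that $\hat\d_N$ restricts, on time-independent multisections, to the operator $\d_N^{N^*\phi}:=\d_N + (N^*\phi)\wedge$ on $\Gamma(\wedge^\bullet A^*)$, which is then an $(N^*\phi)$-quasi differential in the sense of Definition \ref{def:quasi:Jacobi:bialgebroid}, and that $e^t\varphi$ descends to $\varphi\in\Gamma(\wedge^3 A^*)$. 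Once these identifications are in place, the defining identities $(\d_N^{N^*\phi})^2=[\varphi,-]^W_{A^*_\pi}$ and $\d_N^{N^*\phi}\varphi=0$ of a quasi-Jacobi bialgebroid follow by stripping off the $e^t$-factors from the corresponding identities on $\hat A^*$.

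The main obstacle will be the careful bookkeeping of the various $e^t$-weights during this descent: the Lie algebroid structure on $\hat A^*$ involves $e^{-t}\pi$, the 3-section is $e^t\varphi$, and the anchor of $\hat A$ contains a $\p/\p t$ term proportional to $\phi$ through \eqref{anchorhatA}. Matching these exponential weights and extracting simultaneously the $1$-cocycle $W$ and the twist $N^*\phi$ (respectively, from the $\p/\p t$ part of the anchor $\hat\rho_*$ and from the $\d t$ part of $\hat\d_N$ on time-independent sections) is the technical core of the argument; formally one expects a general de-Poissonization lemma asserting that any quasi-Lie bialgebroid structure on $\hat A^*_{e^{-t}\pi}$ whose structural tensors are homogeneous in $e^t$ with the prescribed weights descends canonically to a quasi-Jacobi bialgebroid on $A^*_\pi$.
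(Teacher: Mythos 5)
Your proposal follows essentially the same route as the paper: Poissonize via Proposition \ref{prop:Jac_quasi_Nij:Pois_quasi_Nij}, invoke the Poisson quasi-Nijenhuis result of \cite{Ant} to get a quasi-Lie bialgebroid on $\hat A^*_{e^{-t}\pi}$, and then descend to $A^*_\pi$ using exactly the ``de-Poissonization'' correspondence you postulate, which is Theorem 4.1 of \cite{jf-qjjq} and is cited rather than reproved in the paper. The only step the paper spells out that you compress into ``show that $\hat\d_N$ restricts to $\d_N^{N^*\phi}$'' is the verification that deforming by $N$ commutes with Poissonization, i.e.\ that the derivation $\hat\d_N$ coming from $\hat A_N$ coincides with the one obtained by applying the hat construction to $(A_N,N^*\phi)$; this is a short computation on brackets and anchors and your plan is otherwise sound.
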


\begin{proof}
From Proposition \ref{prop:Jac_quasi_Nij:Pois_quasi_Nij} we know
that $(\hat A, \tilde\pi, N, e^t\varphi)$, with
$\tilde\pi=e^{-t}\pi$, is a Poisson quasi-Nijenhuis algebroid.
 By \cite[Theorem 3.2]{Ant} it follows that $(\hat
A^\ast_{\tilde\pi}, \hat\d_N, e^t\varphi)$ is a quasi-Lie
bialgebroid. Notice that $\hat d_N$ is the derivation associated to
$\hat A_N$, i.e., to the deformation by the tensor $N$ of the Lie
algebroid $\hat A$. Consider now $A_N$, i.e., the deformation by the
tensor $N$ of the Lie algebroid $A$. Since $N$ is not Nijenhuis,
then $A_N$ is not necessarily a Lie algebroid. Nevertheless, the
procedure described in \eqref{anchorhatA}, \eqref{brackethatA} can
be applied to $(A_N,N^*\phi)$ obtaining a new bracket
$\br_{\widehat{A_N}}$ and a new anchor $\widehat{\rho_N}$ on $\hat
A$ or, equivalently, a differential $\widehat{\d_N}$. This
differential coincides with $\hat d_N$ because, for
$X,Y\in\Gamma(A)$, we have
\begin{align*}
[X,Y]_{\hat{A}_N}:&=[NX,Y]_{\hat{A}}+[X,NY]_{\hat{A}}-N[X,Y]_{\hat{A}}\\
                &=[NX,Y]_A+[X,NY]_A-N[X,Y]_A=[X,Y]_{A_N}=[X,Y]_{\widehat{A_N}}
\end{align*}
and
\begin{align*}
{\hat{\rho}}_N(X):&={\hat{\rho}}\smc N(X)= \rho(NX) + \eval{\phi,NX}\frac{\p}{\p t} \\
                &=\rho \smc N(X) + \eval{N^*\phi,X}\frac{\p}{\p t}\\
                &={\widehat{\rho_N}}(X).
\end{align*}

Thus $(\hat A^\ast_{\tilde\pi}, \hat\d_N, e^t\varphi)=(\hat
A^\ast_{\tilde\pi}, \widehat{\d_N}, e^t\varphi)$ and this is known
to  induce a quasi-Jacobi bialgebroid structure on $A^*$: $(A_\pi^*,
W, \d_N^{N^*\phi}, \varphi)$ (see  \cite[Theorem 4.1]{jf-qjjq}).
\end{proof}

For the case of Jacobi quasi-Nijenhuis manifolds we have an
equivalence:

\begin{prop} The quadruple $(M,(\Lambda,E), {\mathcal N}, \omega)$ is a Jacobi quasi-Nijenhuis manifold
if and only if $((T^\ast M \times \Rr)_{(\Lambda,E)}, (-E,0),
\d^{(\gamma, g)}_{\mathcal N}, (\d \omega, \omega))$ is a
quasi-Jacobi bialgebroid.
\end{prop}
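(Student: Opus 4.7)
The plan is to obtain the equivalence by combining Theorem \ref{theor:quasi:Jacobi:bialg} (for the forward implication) with the Poissonization correspondence of Proposition \ref{prop:Jac_quasi_Nij_mfd:Pois_quasi_Nij_mfd} and the known equivalence, in the Poisson setting on a manifold, between Poisson quasi-Nijenhuis structures and quasi-Lie bialgebroid structures on the cotangent bundle established in \cite{StiXu}.

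For the direct implication $(\Rightarrow)$ I would simply apply Theorem \ref{theor:quasi:Jacobi:bialg} to the Jacobi algebroid $(A,\phi)=(TM\times\Rr,(0,1))$ endowed with the Jacobi bivector $\pi=(\Lambda,E)$, the compatible operator $N=\mathcal{N}$, and the $\d^{(0,1)}$-closed $3$-form $\varphi=(\d\omega,\omega)$. A short direct computation identifies the associated data: $W=-(\Lambda,E)^\sharp(0,1)=(-E,0)$, and $\mathcal{N}^\ast(0,1)=(\gamma,g)$ by the very definition of $\mathcal{N}=(N,Y,\gamma,g)$, so Theorem \ref{theor:quasi:Jacobi:bialg} gives exactly the quasi-Jacobi bialgebroid $((T^\ast M\times\Rr)_{(\Lambda,E)},(-E,0),\d^{(\gamma,g)}_{\mathcal N},(\d\omega,\omega))$.

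For the converse $(\Leftarrow)$ the idea is to Poissonize on both sides of the correspondence. Starting from a quasi-Jacobi bialgebroid structure on $(T^\ast M\times\Rr)_{(\Lambda,E)}$, the gauging $\pi\mapsto e^{-t}\pi$ together with $\varphi\mapsto e^t\varphi$ (as already used inside the proof of Theorem \ref{theor:quasi:Jacobi:bialg}) produces a quasi-Lie bialgebroid structure on $T^\ast \hat M$ with $\hat M=M\times\Rr$. Since $\hat M$ is a manifold, the Stiénon--Xu theorem \cite{StiXu} applies and yields a Poisson quasi-Nijenhuis structure $(\hat M,\hat\Lambda,\hat N,\hat\d(e^t\omega))$. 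Finally Proposition \ref{prop:Jac_quasi_Nij_mfd:Pois_quasi_Nij_mfd}, read from right to left, gives back the Jacobi quasi-Nijenhuis manifold $(M,(\Lambda,E),\mathcal{N},\omega)$.

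The main obstacle will be keeping track of the gauging through the dualisation: one has to verify that the quasi-Lie bialgebroid on $T^\ast\hat M$ produced by the Poissonization is exactly the one that Stiénon--Xu associate to $(\hat\Lambda,\hat N,\hat\d(e^t\omega))$, i.e.\ that the differential $\hat\d_N$, the section $W$ and the $3$-form match under the $e^{\pm t}$-scaling. This is essentially the manifold version of the identification $\hat\d_N=\widehat{\d_N}$ carried out in the proof of Theorem \ref{theor:quasi:Jacobi:bialg}, so the verification is a direct check on generators; the reason the argument goes through here (and not for a general Jacobi algebroid, as noted in the introduction about \cite{Sheng}) is precisely that on a manifold the sections of $T^\ast M\times\Rr$ are generated, as a $C^\infty(M)$-module, by exact $1$-forms and constant functions, so the compatibility of brackets needs only be checked on such generators.
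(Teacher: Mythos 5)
Your argument is correct and is essentially the paper's: the published proof is precisely the chain of equivalences Poissonization $\leftrightarrow$ Sti\'enon--Xu $\leftrightarrow$ the quasi-Lie/quasi-Jacobi bialgebroid correspondence of \cite{jf-qjjq}, which is exactly what your converse direction runs, and your forward direction via Theorem~\ref{theor:quasi:Jacobi:bialg} is only a repackaging of the same ingredients (that theorem's own proof goes through the identical Poissonization route). Your identifications $W=-(\Lambda,E)^\sharp(0,1)=(-E,0)$ and ${\mathcal N}^\ast(0,1)=(\gamma,g)$ are the same ones the paper records at the end of its proof.
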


\begin{proof}
From Proposition \ref{prop:Jac_quasi_Nij_mfd:Pois_quasi_Nij_mfd},
$(M,(\Lambda,E), {\mathcal N}, \omega)$ is a Jacobi
quasi-Nijenhuis manifold if and only if $(\hat{M}, \hat{\Lambda},
\hat{N}, e^t \omega)$ is a Poisson quasi-Nijenhuis manifold, which
is equivalent to $((T^\ast \hat{M})_{\hat{\Lambda}},
\hat{\d}_{\hat{N}}, e^t \omega)$ being a quasi-Lie bialgebroid
over $\hat{M}$ \cite{StiXu}.  This, in turn, is equivalent to
$((T^\ast M \times \Rr)_{(\Lambda,E)}, (-E,0), \d^{(\gamma,
g)}_{\mathcal N}, (\d \omega, \omega))$ being a quasi-Jacobi
bialgebroid over $M$ \cite{jf-qjjq}. Notice that $(\Lambda,
E)^\sharp(0,1)= (E,0)$ and $ {\mathcal N}^\ast(0,1)=(\gamma,g)$.

\end{proof}

  Suppose $(A,\phi,\pi,N,\varphi)$ is a Jacobi quasi-Nijenhuis
algebroid. The double of the quasi-Jacobi bialgebroid $(A_\pi^*,
W, \d_N^{N^*\phi}, \varphi)$ is a Courant-Jacobi algebroid (see
Example \ref{double:quasi:Jacobi:bialgebroid}) that we denote by
$E_\pi^\varphi$.

An interesting case is when the $3$-form $\varphi$ is  the image
by $N^*$ of another $\d^{\phi}$-closed $3$-form $\psi$:
\[
\varphi=N^*\psi \quad \mbox{and} \quad \d^\phi\psi=0.
\]
In this case $(A, \phi, N\pi, \psi )$ is a twisted Jacobi
algebroid because
\[\brr{N\pi,N\pi}^\phi=2\pi^\sharp(\varphi)=2\pi^\sharp(N^*\psi)=2N\pi^\sharp(\psi)\]
and $A^*$  has a structure  of Lie algebroid:
$A^{*\psi}_{N\pi}=(A^*, \br_{N\pi}^\psi, N\pi^\sharp )$ with
1-cocycle $W_1=-N\pi^\sharp(\phi)$ (see \cite{jf-qjjq}).

 Equipping $A$ with
the differential $\d^\prime$ given by
\[
\d^\prime f=\d f, \quad \mbox{and} \quad
\d^\prime\al=\d\al-i_{N\pi^\sharp \al}\psi,
\]
  for  $f\in C^\infty(M)$ and $\al\in\Gamma(A^*)$,
we obtain a quasi-Jacobi bialgebroid: $(A^{*\psi }_{N\pi}, W,
\d^{\prime,\, \phi}, \psi)$ \cite{jf-qjjq}. Its double is a
Courant-Jacobi algebroid and we denote it by $E_{N\pi}^\psi$.

\begin{thm}
Let $(A,\phi, \pi, N, \varphi)$ be a Jacobi quasi-Nijenhuis
algebroid and suppose that $\varphi=N^*\psi$, for some
$\d^{\phi}$-closed $3$-form $\psi$, then
 \[
 F=\set{\left(a+N^*\al, Na+\al \right)|\; a\in A \mbox{ and } \al \in A^* } \subset E_{N\pi}^\psi\times \overline{E_\pi^\varphi}
 \]
defines a Courant-Jacobi algebroid morphism between
$E_{N\pi}^\psi$ and $E_\pi^\varphi$.
\end{thm}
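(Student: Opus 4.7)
The proof amounts to verifying that $F$ is a maximal isotropic subbundle supported on $\graf \id_M = \Delta_M$, compatible with the anchor, and closed under the bracket on $E_{N\pi}^\psi \times \overline{E_\pi^\varphi}$.

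First, I would observe that the parameterization $(a,\al) \in \Gamma(A)\oplus\Gamma(A^*) \mapsto (a+N^*\al,\, Na+\al)$ is injective: if $a+N^*\al=0$ in $A\oplus A^*$ then separately $a=0$ and $N^*\al=0$, whence $Na+\al=\al=0$. So $F$ is a vector subbundle of $E_{N\pi}^\psi\times\overline{E_\pi^\varphi}$ of constant rank $2\,\mathrm{rank}(A)$, i.e.\ exactly half the ambient rank. Isotropy is then an immediate calculation using the pairings of the two doubles:
\[
\langle a_1+N^*\al_1,\, a_2+N^*\al_2\rangle_{E_{N\pi}^\psi} = \tfrac12\bigl(\al_1(Na_2)+\al_2(Na_1)\bigr) = \langle Na_1+\al_1,\, Na_2+\al_2\rangle_{E_\pi^\varphi},
\]
so the barred pairing on $F$ is zero. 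Combined with the rank count, $F$ is maximal isotropic.

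Compatibility with the anchor is a direct check: writing out $\rho = \rho^\phi_A + \rho^W_{A^*}$ for each double via Example \ref{double:quasi:Jacobi:bialgebroid}, and using the compatibility $\pi^\sharp\circ N^* = N\circ\pi^\sharp$ together with the relations between the $1$-cocycles $W=-\pi^\sharp\phi$ and $W_1 = -N\pi^\sharp\phi = NW$, one verifies that at every point of $M$ the two tangent-and-$\Rr$ anchor images from the two factors coincide; hence the joint anchor image lies in $T\Delta_M\oplus\Rr$, as required by condition D2 of Theorem \ref{thm:dirac:Jacobi:direct:sum}.

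The technical core is bracket closure. Taking two sections of $F$, one computes their bracket in the product componentwise, using the explicit quasi-Jacobi bialgebroid bracket of Example \ref{double:quasi:Jacobi:bialgebroid} on each factor (with the appropriate twist --- $\psi$ for $E_{N\pi}^\psi$ and $\varphi$ for $E_\pi^\varphi$), and must show the result is again of the form $(b+N^*\beta,\, Nb+\beta)$. The identities in play are the Magri--Morosi vanishing $\C(\pi,N)=0$, which equates $[\,,\,]_{N\pi}$ with the $N^*$-deformation of $[\,,\,]_\pi$; the quasi-Nijenhuis torsion formula $\T_N(X,Y)=-\pi^\sharp(i_{X\wedge Y}\varphi)$, which produces corrections of exactly the shape required to absorb the $\varphi$-deformation in the second component; the hypothesis $\varphi=N^*\psi$, which converts the $\psi$-twist of the bracket on $E_{N\pi}^\psi$ into the $\varphi$-twist on $E_\pi^\varphi$ under application of $N^*$; and $\d^\phi\psi=0$, which controls the residual differential terms. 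The main obstacle is precisely this bracket calculation: many terms must be tracked through a long expansion, the bar on $E_\pi^\varphi$ flips certain contributions, and the crucial cancellation relies essentially on $\varphi=N^*\psi$, which is the algebraic content ensuring that the two quasi-twists of the doubles are matched by $N^*$. (One may alternatively try to reduce to Theorem \ref{thm:quasi:Courant-Jacobi:morph} via the quasi-Jacobi bialgebroid morphism $N^*\colon A_{N\pi}^{*\psi}\to A_\pi^*$ --- which the hypotheses do make into such a morphism --- but the resulting parameterization does not quite coincide with the one in the statement, so the direct verification above is cleaner.)
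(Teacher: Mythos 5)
Your proposal stalls exactly where the proof lives: the bracket-closure step is never carried out. Listing the identities ``in play'' ($\C(\pi,N)=0$, the torsion formula, $\varphi=N^*\psi$, $\d^\phi\psi=0$) is not a verification that the expansion of $\left(a_1+N^*\al_1,Na_1+\al_1\right)\smc\left(a_2+N^*\al_2,Na_2+\al_2\right)$ lands back in $F$; since you yourself flag this as ``the main obstacle'', the proposal is incomplete at its only nontrivial point. (The isotropy, rank count and anchor check are fine, but they are the easy part.)

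More importantly, the parenthetical remark you use to dismiss the reduction is actually the key issue, and the conclusion you draw from it is backwards. The paper's proof \emph{is} the reduction: it verifies that $N^*\colon A^{*\psi}_{N\pi}\to A^*_\pi$ is a quasi-Jacobi bialgebroid morphism (using the Lemma on $\T_{N^*}$, the Magri--Morosi vanishing, $NW=W_1$, $N^*\d^\prime f=\d_N f$, $N\brr{X,Y}_N=\brr{NX,NY}^\prime$ and $N^*\psi=\varphi$) and then invokes Theorem \ref{thm:quasi:Courant-Jacobi:morph}. Applied to $\Phi=N^*$, that theorem produces the subbundle $\set{(\al+Na,\;N^*\al+a)}$, which is the \emph{transpose} of the $F$ printed in the statement; the two coincide only when $N^2=\mathrm{id}$. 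So the mismatch you noticed points to a slip in the printed parameterization, not to a defect of the reduction. Worse, for the $F$ as literally printed your direct verification would fail: already for sections with $\al=0$, closure would require $\brr{Na_1,Na_2}_N=N\brr{a_1,a_2}^\prime$ and $\psi(a_1,a_2,N\,\cdot)=\psi(N^2a_1,N^2a_2,N^2\,\cdot)$, neither of which follows from the hypotheses --- what the hypotheses do give is the transposed pair $\brr{Na_1,Na_2}^\prime=N\brr{a_1,a_2}_N$ and $\varphi(a_1,a_2,\cdot)=\psi(Na_1,Na_2,N\,\cdot)$. The way to complete the argument is therefore to correct the parameterization to $\set{(\al+Na,\;N^*\al+a)}$ and then prove, in detail, that $N^*$ is a quasi-Jacobi bialgebroid morphism so that Theorem \ref{thm:quasi:Courant-Jacobi:morph} applies.
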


In order to prove the theorem, we need to remark the following
property.

\begin{lem}
Let $(A,\phi, \pi, N, \varphi)$ be a Jacobi quasi-Nijenhuis Lie
algebroid, then
\[
\eval{\T_{N^*}(\al,\be),X}=\varphi(\pi^\sharp\al, \pi^\sharp\be,
X),
\]
for all $X\in\Gamma(A)$ and  $\al,\be\in\Gamma(A^*)$.
\end{lem}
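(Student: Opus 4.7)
My plan is to reduce the identity to the Poisson quasi-Nijenhuis case via the Poissonization, since the Jacobi calculus with its $\phi$-twists obscures the natural cancellations.

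First I would unwind the torsion. Because the Magri--Morosi concomitant vanishes, the deformation of the bracket $\brr{\,,\,}_\pi$ along $N^*$ coincides with $\brr{\,,\,}_{N\pi}$, so
\[
\T_{N^*}(\al,\be) = \brr{N^*\al,N^*\be}_\pi - N^*\brr{\al,\be}_{N\pi}.
\]
Next, by Proposition \ref{prop:Jac_quasi_Nij:Pois_quasi_Nij}, $(\hat A,\tilde\pi,N,e^t\varphi)$ with $\tilde\pi=e^{-t}\pi$ is a Poisson quasi-Nijenhuis Lie algebroid, and the relation \eqref{relgauging2} shows that if $\tilde\al=e^t\al$, $\tilde\be=e^t\be$ for time-independent $\al,\be$, then $\tilde\pi^\sharp\tilde\al=\pi^\sharp\al$ and $\T_{N^*}(\tilde\al,\tilde\be)=e^t\,\T_{N^*}(\al,\be)$. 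Thus the Jacobi claim reduces, after cancelling the factor $e^t$, to the Poisson-case identity
\[
\eval{\T_{N^*}(\tilde\al,\tilde\be),X} = (e^t\varphi)\bigl(\tilde\pi^\sharp\tilde\al,\tilde\pi^\sharp\tilde\be,X\bigr)
\]
in $(\hat A,\tilde\pi,N,e^t\varphi)$.

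Finally, the Poisson identity is checked by direct computation. Expanding $\brr{N^*\al,N^*\be}_{\tilde\pi}-N^*\brr{\al,\be}_{N\tilde\pi}$ by the formula $\brr{\al,\be}_{\tilde\pi}=\hat\Lie_{\tilde\pi^\sharp\al}\be-\hat\Lie_{\tilde\pi^\sharp\be}\al-\hat\d\,\tilde\pi(\al,\be)$, using $N\tilde\pi^\sharp=\tilde\pi^\sharp N^*$, the Cartan formula, and $[\tilde\pi,\tilde\pi]=0$, the $\hat\d$- and $\hat\Lie$-terms cancel pairwise, leaving exactly $\eval{\T_N(\tilde\pi^\sharp\al,\tilde\pi^\sharp\be),X}$. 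The quasi-Nijenhuis hypothesis $\T_N(\cdot,\cdot)=-\tilde\pi^\sharp\, i_{\cdot\wedge\cdot}(e^t\varphi)$ converts this residue into $(e^t\varphi)(\tilde\pi^\sharp\al,\tilde\pi^\sharp\be,X)$, which is what we want.

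The main obstacle is the Cartan-calculus bookkeeping in the last step: several Lie-derivative and interior-product terms must be regrouped via the graded Leibniz rule and $[\tilde\pi,\tilde\pi]=0$ to isolate the $\T_N$ residue, and the compatibility $N\tilde\pi^\sharp=\tilde\pi^\sharp N^*$ is crucial for this regrouping. Passing first to the Poissonization is what makes the calculation manageable, as it removes the systematic $\phi$-twists of $\d^\phi$ and $\Lie^\phi$ that would otherwise appear throughout.
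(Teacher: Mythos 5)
Your overall strategy (reduce to the Poissonization, then identify $\T_{N^*}$ with a $\T_N$-term via the compatibility of $N$ and $\pi$) is reasonable, and your first two steps are fine: the vanishing of the Magri--Morosi concomitant does give $\T_{N^*}(\al,\be)=\brr{N^*\al,N^*\be}_\pi-N^*\brr{\al,\be}_{N\pi}$, and the $e^t$-scaling argument correctly transports the statement to $(\hat A,\tilde\pi,N,e^t\varphi)$. The paper, however, needs no Poissonization here: it invokes the Kosmann-Schwarzbach--Magri identity $\eval{\T_{N^*}(\al,\be),X}=\eval{\al,\T_N(X,\pi^\sharp\be)}$, valid for any compatible pair $(\pi,N)$ and applied directly to the $\phi$-twisted brackets, and then substitutes $\T_N(X,\pi^\sharp\be)=-\pi^\sharp\bigl(i_{X\wedge\pi^\sharp\be}\varphi\bigr)$.

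The genuine problem is your final step, which is both unexecuted and mis-stated. The claimed residue $\eval{\T_N(\tilde\pi^\sharp\al,\tilde\pi^\sharp\be),X}$ is ill-typed: $\T_N(\tilde\pi^\sharp\al,\tilde\pi^\sharp\be)$ and $X$ are both sections of $\hat A$, so the duality pairing between them is undefined. Even reading it charitably, converting $\T_N(\tilde\pi^\sharp\al,\tilde\pi^\sharp\be)=-\tilde\pi^\sharp\bigl(i_{\tilde\pi^\sharp\al\wedge\tilde\pi^\sharp\be}(e^t\varphi)\bigr)$ into the scalar $(e^t\varphi)(\tilde\pi^\sharp\al,\tilde\pi^\sharp\be,X)$ would require a covector $\gamma$ with $\pi^\sharp\gamma=\pm X$, i.e.\ inverting $\pi^\sharp$, which is impossible in general. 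What the deferred ``direct computation'' must actually produce is the transposed identity $\eval{\T_{N^*}(\al,\be),X}=\eval{\al,\T_N(X,\pi^\sharp\be)}$, in which $X$ itself occupies a slot of $\T_N$ and only one of $\al,\be$ is hit by $\pi^\sharp$; the quasi-Nijenhuis hypothesis then gives $\eval{\al,-\pi^\sharp(i_{X\wedge\pi^\sharp\be}\varphi)}=\varphi(\pi^\sharp\al,\pi^\sharp\be,X)$ with no inversion needed. Since that transposed identity is exactly the nontrivial content of the lemma --- the Cartan-calculus bookkeeping you yourself flag as the main obstacle --- and your proposal neither carries it out nor states its conclusion in a form that could close the argument, the proof as written does not go through.
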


\begin{proof}
The compatibility between $N$ and $\pi$ implies that (see
\cite{KosMagri})
\begin{align*}
\eval{\T_{N^*}(\al,\be),X}&=\eval{\al, \T_N(X, \pi^\sharp\be)},
\end{align*}
so
\begin{align*}
\eval{\T_{N^*}(\al,\be),X}&=\eval{\al, -\pi^\sharp\left(i_{X\wedge
\pi^\sharp\be}\varphi\right)}=-\varphi(X,\pi^\sharp\be,\pi^\sharp
\al)=\varphi(\pi^\sharp \al,\pi^\sharp \be,X).
\end{align*}
\end{proof}

\begin{proof}[Proof of the Theorem]
First notice that $N^*: A^{*\psi}_{N\pi}\to A^*_{\pi}$ is a Jacobi
algebroid morphism because it is obviously compatible with the
anchors, $NW=-N\pi^\sharp\phi=W_1$ and
\begin{align*}
N^*\brr{\al,\be}_{N\pi}^\psi&=N^*\brr{\al, \be}_{N\pi} + N^*\psi(\pi^\sharp \al, \pi^\sharp \be, - )\\
&= \brr{N^*\al, N^* \be}_\pi - \T_{N^*}(\al,\be) +
\varphi(\pi^\sharp \al, \pi^\sharp \be, -)= \brr{N^*\al, N^*
\be}_\pi.
\end{align*}

Let $\br^\prime$ be the bracket on the sections of $A$ induced by
the differential $\d^\prime$. First notice that $N^*$ preserves
all the cocycles. Also
$$\brr{X,f}^\prime=\eval{\d^\prime f, X}= \eval{\d f, X}, $$
so $\ds N^*\d^\prime f=\d_N f$, for all $f\in C^\infty(M)$ and
$X\in \Gamma(A)$. And since
\[
\brr{X,Y}^\prime= \brr{X,Y} - (N\pi)^\sharp(\psi(X,Y,  - )),
\]
we have:
\begin{align*}
N\brr{X,Y}_N&= \brr{NX, NY} - \T_N(X,Y)= \brr{NX,NY} + \pi^\sharp(i_{X\wedge Y}N^*\psi)\\
&= \brr{NX,NY} + \psi (NX, NY, N\pi^\sharp - )=
\brr{NX,NY}^\prime,
\end{align*}
for all $X,Y\in \Gamma(A)$.

In this way we may conclude that  $N^*:A^{*\psi}_{N\pi}\to
A^*_{\pi}$ is a quasi-Jacobi bialgebroid morphism (see Definition
\ref{def:quasi:Jacobi:morphism}) and the result follows from
Theorem \ref{thm:quasi:Courant-Jacobi:morph}.
\end{proof}

\section*{Acknowledgments}
This work was partially supported by CMUC/FCT. The first and third
author also acknowledge partial financial support by projects
PTDC/MAT/69635/2006 and PTDC/MAT/099880/2008. The second author
acknowledges partial financial support by a FCT/Ci\^{e}ncia 2008
research fellowship and a Juan de la Cierva postdoctoral grant.

\end{document}